\title
{Moments of balanced P\'olya urns}
\date{20 May, 2025}
\author{Svante Janson}
\thanks{Partly supported by the Knut and Alice Wallenberg Foundation}
\address{Department of Mathematics, Uppsala University, PO Box 480,
SE-751~06 Uppsala, Sweden}
\email{svante.janson@math.uu.se}
\urladdr{http://www.math.uu.se/svante-janson}
\subjclass[2010]{60C05; 60F25} 
\numberwithin{equation}{section}
\renewcommand\le{\leqslant}
\renewcommand\ge{\geqslant}
\theoremstyle{plain}
\newtheorem{theorem}{Theorem}[section]
\newtheorem{lemma}[theorem]{Lemma}
\theoremstyle{definition}
\newtheorem{problem}[theorem]{Problem}
\newtheorem{remark}[theorem]{Remark}
\newtheorem*{ack}{Acknowledgement}
\theoremstyle{remark}
\newenvironment{romenumerate}[1][-10pt]{
\addtolength{\leftmargini}{#1}\begin{enumerate}
 }{\end{enumerate}}
\newenvironment{PUenumerate}[1][0pt]{
\begin{enumerate}
 }{\end{enumerate}}
\newcounter{oldenumi}
\newenvironment{PUenumerateq}
{\setcounter{oldenumi}{\value{enumi}}
\begin{PUenumerate} \setcounter{enumi}{\value{oldenumi}}}
{\end{PUenumerate}}
\newcounter{thmenumerate}
\newcounter{xenumerate}   
\newcommand{\refT}[1]{Theorem~\ref{#1}}
\newcommand{\refL}[1]{Lemma~\ref{#1}}
\newcommand{\refS}[1]{Section~\ref{#1}}
\newcommand{\refSS}[1]{Section~\ref{#1}}
\newcommand{\refApp}[1]{Appendix~\ref{#1}}
\xdef\klockan{\the\count1.0\the\count255}
\xdef\klockan{\the\count1.\the\count255}\fi
\newcommand{\sumi}{\sum_{i=1}^\infty}
\newcommand{\sumin}{\sum_{i=1}^n}
\newcommand\set[1]{\ensuremath{\{#1\}}}
\newcommand\lrset[1]{\ensuremath{\left\{#1\right\}}}
\newcommand\xpar[1]{(#1)}
\newcommand\bigpar[1]{\bigl(#1\bigr)}
\newcommand\Bigpar[1]{\Bigl(#1\Bigr)}
\newcommand\biggpar[1]{\biggl(#1\biggr)}
\newcommand\lrpar[1]{\left(#1\right)}
\newcommand\abs[1]{|#1|}
\newcommand\bigabs[1]{\bigl|#1\bigr|}
\newcommand\Bigabs[1]{\Bigl|#1\Bigr|}
\newcommand\biggabs[1]{\biggl|#1\biggr|}
\def\rompar(#1){\textup(#1\textup)}    
\newcommand\parfrac[2]{\lrpar{\frac{#1}{#2}}}
\def\xexp(#1){e^{#1}}
\newcommand\ntoo{\ensuremath{{n\to\infty}}}
\newcommand\norm[1]{\|#1\|}
\newcommand\bignorm[1]{\bigl\|#1\bigr\|}
\newcommand\Bignorm[1]{\Bigl\|#1\Bigr\|}
\newcommand\biggnorm[1]{\biggl\|#1\biggr\|}
\newcommand\punkt{.\spacefactor=1000}    
\newcommand\ie{i.e\punkt}
\newcommand\eg{e.g\punkt}
\newcommand\cf{cf\punkt}
\newcommand{\as}{a.s\punkt}
\newcommand{\tend}{\longrightarrow}
\newcommand\dto{\overset{\mathrm{d}}{\tend}}
\newcommand\bbR{\mathbb R}
\newcommand\bbC{\mathbb C}
\newcounter{CC}
\newcounter{cc}
\renewcommand\Re{\operatorname{Re}}
\newcommand\E{\operatorname{\mathbb E{}}}
\renewcommand\P{\operatorname{\mathbb P{}}}
\newcommand\Cov{\operatorname{Cov}}
\newcommand\gd{\delta}
\newcommand\gD{\Delta}
\newcommand\gam{\gamma}
\newcommand\gl{\lambda}
\newcommand\gs{\sigma}
\newcommand\gS{\Sigma}
\newcommand\eps{\varepsilon}
\renewcommand\phi{\xxx}  
\newcommand\cE{\mathcal E}
\newcommand\cF{\mathcal F}
\newcommand\indic[1]{\boldsymbol1_{#1}} 
\newcommand\qw{^{-1}}
\newcommand\qq{^{1/2}}
\newcommand\intoi{\int_0^1}
\newcommand\ooo{[0,\infty)}
\newcommand\dd{\,\mathrm{d}}
\newcommand{\ui}{uniformly integrable}
\newcommand\rv{random variable}
\newcommand\lhs{left-hand side}
\newcommand\rhs{right-hand side}
\newcommand\sumiq{\sum_{i=1}^q}
\newcommand\sumjq{\sum_{j=1}^q}
\newcommand\gsa{\gs(A)}
\newcommand\el{E_\lambda}
\newcommand\nl{N_\lambda}
\newcommand\pl{P_\lambda}
\newcommand\nul{\nu_\gl}
\newcommand\xoo{_1^\infty}
\newcommand\ixoo{_{i=1}^\infty}
\newcommand\mgds{martingale difference sequence}
\newcommand{\Holder}{H\"older}
\newcommand{\Polya}{P\'olya}
\begin{document}

\begin{abstract} 
We give  bounds for (central) moments for balanced P\'olya urns
under very general conditions. In some cases, these bounds imply that moment
convergence holds in earlier known results on asymptotic distribution.
The results overlap with previously known results, but are here given more
generally and with a simpler proof.
\end{abstract}

\maketitle

\section{Introduction}\label{S:intro}

A (generalized) \Polya{} urn contains balls of different colours.
A ball is drawn at random from the urn, and is replaced by a (possibly
random)
set of balls that depends on the colour of the drawn balls.
This is repeated \emph{ad infinitum}, and we study the
asymptotic composition of the urn.
For details, and the assumptions used in the present paper, see
\refSS{SSpolya}.

Asymptotic results for \Polya{} urns, including asymptotic distributions
(after suitable normalization), 
have been proved by many authors under various conditions
and in varying generality,
beginning with the
pioneering papers by
\citet{Markov1917} and \citet{EggPol}; 
for the history of \Polya{} urns, see \eg{} \citet{Mahmoud}; see also the
references in \cite{SJ154} and \cite{SJ380}.

It is well-known
that the asymptotic behaviour of a \Polya{} urn depends on the eigenvalues of
the \emph{intensity matrix} of the urn defined in \eqref{A} below, and in
particular on the two largest (in real part) eigenvalues $\gl_1$ and
$\gl_2$.
In particular, 
if the urn satisfies some irreducibility condition
(and some technical conditions)
there is a dichotomy
(or trichotomy if the critical case 
$\Re\gl_2=\frac12\gl_1$ is considered separately):
\begin{romenumerate}
\item 
If $\Re\gl_2\le\frac12\gl_1$ (a  \emph{small urn}), 
then
the number of balls of a given colour is asymptotically normal.
\item 
If $\Re\gl_2>\frac12\gl_1$ (a \emph{large urn}), 
then this is not true: 
then there are
limits in distribution, but the limiting distributions
have no simple description and are (typically, at least) not normal;
furthermore, there may be 
oscillations so that suitable subsequences converge in
distribution but not the full sequence.
\end{romenumerate}
See for example \cite[Theorems 3.22--3.24]{SJ154} for general results of
this type.
\citet{P} showed more precise results for large urns 
(not necessarily irreducible) that are 
\emph{balanced}, see \refSS{SSpolya}.
As another example, for urns that are triangular (and thus not
irreducible), the asymptotic distribution still depends in
important ways on the eigenvalues of $A$, but in a more complicated way,
see \cite{SJ380}.
(Note that
we see again a dichotomy between small and large urns
for the triangular urns 
in \cite[Theorem 1.3(i)--(iii)]{SJ169}
but not in
\cite[Theorem 1.3(iv)--(v)]{SJ169}.)

In the present paper we give no results on asymptotic distributions;
instead we consider as a complement the question whether 
moment convergence holds
in
such results on asymptotic distributions.
This too is not a new subject. 
For example, for balanced small urns with 2 colours,
\citet{Bernstein1,Bernstein2}  showed
asymptotic normality in the small urn case and gave results on mean and
variance; 
\citet{BagchiPal} (independently, but 45 years later)
gave another proof of asymptotic normality
using the method of moments and thus
proving moment convergence as part of the proof.
Other examples are 
\citet{BaiHu,BaiHu2005}, who consider a
version of \Polya{} urns allowing  time-dependent replacements
and show  for small urns both
asymptotic normality and (implicitly)
asymptotic results for mean and variance.
The results by \citet{P} for balanced large urns include convergence
in $L^p$ for any $p$, and thus moment convergence.
\citet{SJ310} proved moment convergence
for irreducible small urns; the method there combined the known result on
asymptotic normality  in this case, and moment estimates by the method of
\cite{P} leading to uniform integrability. 
\citet{SJ307} gave asymptotics for mean and variance of balanced small urns
(consistent with known results on asymptotic normality in the irrdeucible case,
but more general).

In the present paper (as in many of the references above) we consider only 
\emph{balanced} urns, but otherwise our conditions are very general.
(For example, we allow random replacements.)
The main purpose of the  paper is to prove 
general \emph{bounds} for moments
of an arbitrary fixed order (see \refS{Smain}); 
these bounds imply uniform integrability and
can thus in many cases be combined with known results on convergence in
distribution to show that moment convergence hold in the latter results
(see for example \refT{T3}).
Our results are to a large extent not new, but as far as we know stated in
greater generality than earlier; moreover, the method of proof seems simpler
that the ones used earlier.
For earlier related results, as said above, \citet{P} showed moment
convergence for balanced large urns; 
\citet{SJ310} showed moment bounds (and
thus convergence) for balanced small irreducible urns; 
\citet{SJ307} treated first and second moments for balanced 
(and often small) urns -- the method used here is a further development and
simplification of the method in \cite{SJ307}.

We do not make any assumptions on the structure of the urn in this paper
(except being balanced). However, the results are particularly adapted to
the irreducible case, where the bounds that we obtain match the
normalizations in the known results on asymptotic normality, thus showing
that these results hold with moment convergence.
The results below apply also to, for example, balanced 
triangular urns, but in that
case the results are in general less sharp and there is often a
gap between the general bounds proved here and the moment convergence proved
for such urns in \cite[Theorem 12.5]{SJ380}.
For example, for a balanced triangular urn with 2 colours,
see \cite[Example 14.4 Case 4]{SJ380}
and \cite[Theorem 1.3(v)]{SJ169}, 
the bounds below are sharp if the urn is large
but not if it is small.

Our results for balanced urns are very general, but that leaves 
one obvious open problem:

\begin{problem}
In the present paper,
we consider only balanced urns.
We leave it as a challenging open problem to prove (or disprove?)
similar results for unbalanced urns.
\end{problem}

In fact, it seems that only essentially trivial examples are known  
of an unbalanced urn where 
(after suitable normalization) 
convergence in distribution holds with all moments.
In the negative direction,
\cite[Example 14.2]{SJ380} gives an example (an unbalanced diagonal urn)
with convergence in distribution
to a limit with infinite mean; hence moment
convergence does not hold. However, this counterexample seems rather special
and is maybe not typical.

\refS{Sprel} gives definitions and some other preliminaries.
\refS{Smain} contains the statements of the main results, which are proved
in Sections \ref{Spf1}--\ref{Smatrix}.
\refApp{Aui} gives some further, more technical, results,
and \refApp{AP} a  proof of a simple lemma for which we do not know a reference.

\begin{ack}
  This paper  owes much to an anonymous referee of my earlier paper
  \cite{SJ307} dealing with first and second moments.
The referee suggested an alternative method of proof of  results there,
using \refL{LLui} below, and 
also said that this could be extended to prove convergence of higher moments.
I am very grateful for that suggestion, which 
(although not used in \cite{SJ307}) 
I have developed and further simplified, leading to 
the present paper.
\end{ack}

\section{Preliminaries and notation}\label{Sprel}

\subsection{Definition and assumptions}\label{SSpolya}

A (generalized) \Polya{} urn process is defined as a discrete-time Markov
process of the following type:
\begin{PUenumerate}[-15pt]
  
\item \label{Po1}
The state of the urn at time $n$ is given by the vector
$X_n=(X_{n1},\dots,X_{nq})\allowbreak\in \ooo^q$ for some given integer $q\ge2$,
where $X_{ni} $ is interpreted as the number of balls of colour $i$ in the urn;
thus balls can have the $q$ colours (types) $1,\dots,q$.
The urn starts with a given vector $X_0$.

\item\label{Po2} 
Each colour $i$ has a given \emph{activity} (or weight) $a_i\ge0$,
and a (generally random) 
\emph{replacement vector} $\xi_i=(\xi_{i1},\dots, \xi_{iq})$.
At each time $ n+1\ge 1 $, 
the urn is updated by drawing
one ball at random from the urn, with the probability of any ball
proportional to its activity. Thus, the drawn ball has colour $i$  
with probability 
\begin{equation}
\label{urn}
 \frac{a_iX_{ni}}{\sum_{j}a_jX_{nj}}. 
\end{equation}
If the drawn ball has colour $i$, it is replaced
together with $\Delta X_{nj}$ balls of colour $j$, $j=1,\dots,q$,
where the random vector 
$ \Delta X_{n}=(\Delta X_{n1},\dots, \Delta X_{nq}) $
has the same distribution as $ \xi_i$ and is
independent of everything else that has happened so far.  
Thus, the urn is updated to $X_{n+1}=X_n+\gD X_n$.
\end{PUenumerate}

In many applications, the numbers $X_{nj}$ and $\xi_{ij}$ are integers, but
that is not necessary;
as is well-known, the \Polya{} urn process
is well-defined also for \emph{real} $X_{ni}$ and $\xi_{ij}$, 
with probabilities for the different replacements still given by \eqref{urn};
the ``number of balls'' $X_{ni}$ may thus be any nonnegative real
number. (This can be interpreted as the amount (mass) of colour $i$ in the
urn, rather than the number of discrete balls.) 
The replacements $\xi_{ij}$ are thus in general random real numbers;
we allow them to be negative,
meaning that balls may be subtracted from the urn.
However, we always assume that $X_0$ and the random vectors $\xi_i$ are
such that,
for every $n\ge0$,  \as
\begin{equation}\label{ten}
\text{each }
 X_{ni}\ge0 \quad \text{and}\quad  \sum_i a_i X_{ni}>0, 
\end{equation}
so that
\eqref{urn} really gives meaningful probabilities,
and the process does not stop due to lack of balls to be removed. 
An urn with such initial
conditions and replacement rules is called \emph{tenable}.


Note that we allow some activities $a_i=0$ (as long as \eqref{ten} holds);
this means that balls of colour $i$ never are drawn. (This is useful in some
applications, see \eg{} \cite{SJ154}.)

For simplicity,
we assume throughout this paper also:
\begin{PUenumerateq}
\item \label{Po0}
The initial state $X_0$ is nonrandom. 
\end{PUenumerateq}

\begin{remark}
The results below can be extended to random $X_0$ by
conditioning on $X_0$, but 
we have not checked exactly what conditions are needed, and we
leave this to the reader.
\end{remark}

We will in the present paper (as in \cite{SJ307}) only consider 
balanced \Polya{} urns, defined as follows:
\begin{PUenumerateq}
\item \label{Pobal}
The \Polya{} urn is \emph{balanced} if 
\begin{equation}\label{balanced}
   \sum_j a_j\xi_{ij} = b>0
\end{equation}
(\as)
for some constant $b$ and every $i$. In other words, the added activity
after each draw is fixed (nonrandom and not depending on the colour of the
drawn ball). 
\end{PUenumerateq}

The balance condition \ref{Pobal} together with \ref{Po0} imply that the 
denominator in \eqref{urn} 
(i.e., the total activity in the urn)
is deterministic for each $n$, see \eqref{wn}--\eqref{wn1}.
This is a significant simplification,  assumed in many papers on
\Polya{} urns, and luckily satisfied in many applications.

\subsection{Some notation}
We regard all vectors as column vectors.
We use standard notations for (real or complex)
vectors and matrices (of sizes $q$ and
$q\times q$, respectively),
in particular ${}'$ for transpose;
we also use
$\cdot$ for the bilinear scalar product defined by
$u\cdot v=u'v$ for any vectors $u,v\in\bbC^q$. 
(This is the standard scalar product for $u,v\in\bbR^q$,
but note the abscence of conjugation in general.)
We  denote the standard Euclidean norm for vectors
by  $\abs{\cdot}$, 
and denote
the operator norm 
for matrices by $\norm{\cdot}$.

Let $a:=(a_1,\dots,a_q)'$ be the vector of activities.
Thus, the balance condition \eqref{balanced} can be written $a\cdot\xi_i=b$
a.s.\ for all $i$.

We let $[q]:=\set{1,\dots,q}$, the (finite) set of colours.

For a random variable (or vector) $X$, we denote the usual $L^p$-norm by 
$\norm{X}_p:=(\E|X|^p)^{1/p}$, $p\ge1$.

We let $C$ denote unspecified constants, possibly different at each occurrence.
They will in general depend on the \Polya{} urn, 
i.e., on $q$, $X_0$ and the distribution of $\xi_i$, $i\in[q]$,
but the do not depend on $n$.
We similarly use $C_p$ for constants that also may depend on the power $p\ge2$.

\subsection{The intensity matrix}
The \emph{intensity matrix} of the \Polya{} urn is
the $ q\times q $  matrix
\begin{equation}\label{A}
A:=(a_j\E\xi_{ji})_{i,j=1}^{q}.   
\end{equation}
Note that, for convenience and following \cite{SJ154} and \cite{SJ307}, 
we have defined $A$
so that the element $(A)_{ij}$ is a measure of the intensity of adding balls
of colour $i$ coming from drawn balls of colour $j$; the transpose matrix
$A'$ is often used in other papers.
(We may unfortunately have contributed to notational confusion by this choice in
\cite{SJ154}.)  
It is well-known that
the intensity matrix $ A $ and in particular its eigenvalues and
eigenvectors have a central role for  asymptotical results.
%

\subsection{Eigenvalues and spectral decomposition}
We shall use the Jordan decomposition of the matrix $A$ in the
following form. 
There exists a decomposition of the {complex} space $\bbC^q$ as a
direct sum $\bigoplus_\gl \el$ of generalized eigenspaces $\el$, such
that $A-\gl I$ is a nilpotent operator on $\el$;
here 
$I$ is the identity matrix 
and
$\gl$ ranges over the set $\gs(A)$ of eigenvalues of $A$.
(In the sequel, $\gl$ will always denote an eigenvalue.)
In other words, there exist projections $\pl$, $\gl\in\gs(A)$, 
that commute with $A$ and satisfy
\begin{gather}
\sum_{\gl\in\gsa}\pl=I,
\label{pl}\\
A\pl=\pl A=\gl\pl+\nl,
\label{24b}
\end{gather}
where $\nl=\pl\nl=\nl\pl$ is nilpotent. 
Moreover, $\pl P_\mu=0$ when $\gl\neq\mu$.
We let 
$\nul\ge0$ be the integer such that $\nl^{\nul}\neq0$ but $\nl^{\nul+1}=0$.
(Equivalently, in the Jordan normal form of $A$, the largest Jordan
block with $\gl$ on the diagonal has size $\nul+1$.) 
Hence $\nul=0$ if and only if $\nl=0$, and this happens for all $\gl$
if and only if $A$ is diagonalizable. 

The eigenvalues of $A$ are denoted $\gl_1,\dots,\gl_q$ (repeated according
to their algebraic multiplicities); we assume that they are ordered 
with decreasing real parts: $\Re\gl_1\ge\Re\gl_2\ge\dots$, and furthermore,
when the real parts are equal, in order of decreasing $\nu_j:=\nu_{\gl_j}$. 
In particular, 
if $\gl_1>\Re\gl_2$, then $\nu_j\le\nu_2$ for every eigenvalue $\gl_j$
with $\Re\gl_j=\Re\gl_2$.

Recall that the urn is called \emph{small} if $\Re\gl_2\le\frac12\gl_1$
and \emph{large} if $\Re\gl_2>\frac12\gl_1$. 

In the balanced case, by \eqref{A} and \eqref{balanced},
\begin{equation}\label{bala}
  a'A
=\Bigpar{\sumiq a_i(A)_{ij}}_j
=\Bigpar{\sumiq a_ia_j\E \xi_{ji}}_j
=\bigpar{a_j\E (a\cdot \xi_{j})}_j
=ba',
\end{equation}
\ie, $a'$ is a left eigenvector of $A$ with eigenvalue $b$.
Thus $b\in\gsa$.
We shall assume that, moreover, $b$ is the largest eigenvalue, \ie,
\begin{equation}\label{gl1b}
  \gl_1=b.
\end{equation}
This is a very weak assumption; see \cite[Appendix A]{SJ307}
where it is shown that there are counterexamples but only in a trivial manner.
In particular, the following is shown there.
\begin{lemma}[{\cite[Lemma A.1]{SJ307}}]\label{Lapp}
  If the \Polya{} urn is tenable and balanced, 
and moreover any colour has a nonzero probability of ever appearing in the urn,
then $\Re\gl\le b$ for every
  $\gl\in\gs(A)$, and, furthermore, if\/ $\Re\gl=b$ then $\nul=0$.
We may thus assume $\gl_1=b$.
\end{lemma}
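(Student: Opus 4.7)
The plan is to derive, for each (generalized) left eigenvector of $A$, a martingale in the pairing $v\cdot X_n$, and to pit its normalizing factor against the deterministic linear bound $\abs{X_n}=O(n)$ coming from balance and tenability. By~\eqref{balanced} and~\ref{Po0}, the total activity $a\cdot X_n = w_n := w_0 + nb$ is deterministic, so \eqref{urn} gives $\E[X_{n+1}\mid\cF_n]=(I+A/w_n)X_n$. For a left eigenvector $v'A=\gl v'$ this yields the complex-valued martingale
\begin{equation*}
 M_n := \frac{v\cdot X_n}{\pi_n(\gl)},\qquad \pi_n(\gl):=\prod_{k=0}^{n-1}\Bigl(1+\frac{\gl}{w_k}\Bigr),
\end{equation*}
with $\log \pi_n(\gl) = (\gl/b)\log n + O(1)$, hence $\abs{\pi_n(\gl)} \asymp n^{\Re\gl/b}$. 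Since $X_n\ge 0$ and $a\cdot X_n=O(n)$, every component with $a_i>0$ satisfies $X_{n,i}\le w_n/a_i=O(n)$, and for $a_i=0$ (never-drawn colour) a direct estimate on the increments gives $\E X_{n,i}^p = O(n^p)$ under the standing moment assumptions on the $\xi_j$. Therefore $\E\abs{M_n}^p\le C\abs{v}^p n^{p(1-\Re\gl/b)}$.

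Assume $\Re\gl>b$; then $M_n\to 0$ in $L^1$, so the martingale identity gives $v\cdot X_0=\E M_n = 0$, and applying the same argument from time $m$ (conditioning on $\cF_m$) yields $v\cdot X_m=0$ a.s.\ for every $m$. Writing $X_{m+1}=X_m+\xi_{J_m}$, where $J_m$ is the drawn colour, one reads off $v\cdot\xi_j=0$ a.s.\ whenever colour $j$ can be drawn at some time; by the hypothesis that every colour eventually appears this holds for every $j$ with $a_j>0$. The eigenvalue identity $\gl v_i=(v'A)_i=a_i\E(v\cdot\xi_i)$ then gives $v_i=0$ for $a_i>0$ and (trivially) for $a_i=0$, so $v=0$, contradicting $v$ being an eigenvector. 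Hence $\Re\gl\le b$.

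For the Jordan part, suppose $\Re\gl=b$ and $\nul\ge 1$; pick $v_1,v_2$ with $v_1'A=\gl v_1'$ and $v_2'A=\gl v_2'+v_1'$. A parallel computation gives $\E[Y_{n+1}-Y_n\mid\cF_n]=M_n^{(1)}/(w_n+\gl)$, where $Y_n:=(v_2\cdot X_n)/\pi_n(\gl)$ and $M_n^{(1)}:=(v_1\cdot X_n)/\pi_n(\gl)$, so $Y_n-\phi_n M_n^{(1)}$ is a martingale with $\phi_n:=\sum_{k<n}1/(w_k+\gl)=b^{-1}\log n+O(1)$. Since $\Re\gl=b$ gives $\abs{\pi_n(\gl)}\asymp n$ and thus $\abs{Y_n}\le C$, restarting at time $m$ shows that $\E[Y_n\mid\cF_m] = v_2\cdot X_m + \phi_n^{(m)}(v_1\cdot X_m)$ is bounded while $\abs{\phi_n^{(m)}}\to\infty$, forcing $v_1\cdot X_m=0$ a.s.\ for every $m$. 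The preceding paragraph applied to $v_1$ now yields $v_1=0$, contradicting the existence of the Jordan chain. Longer chains are handled by the same computation and induction, so $\nul=0$ whenever $\Re\gl=b$; combined with the first assertion this justifies taking $\gl_1=b$.

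I expect the main obstacle to be the step converting ``$v\cdot X_m=0$ a.s.\ for every $m$'' into ``$v\cdot\xi_j=0$ a.s.\ for every $j$ with $a_j>0$'': one must argue carefully that the hypothesis does imply every such colour becomes drawable at some time, and that conditioning on the drawing event preserves the (independent) distribution of the fresh replacement $\xi_j$. The remaining pieces --- the two martingale identities, the asymptotics of $\pi_n(\gl)$ and $\phi_n$, and the final linear-algebraic conclusion --- are routine.
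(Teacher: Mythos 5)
The paper does not prove this lemma itself; it cites \cite[Lemma A.1]{SJ307}, which also proceeds via the martingale $v\cdot X_n/\pi_n(\lambda)$ and the deterministic linear growth $w_n=w_0+nb$. Your proposal is therefore essentially the same approach, and it is essentially correct. Two small points deserve tightening. First, your phrase ``$|Y_n|\le C$'' is not a pointwise bound unless all $a_i>0$; in general one only has $\E|Y_n|\le C$ (using $\E|X_{n,i}|=O(n)$ for $a_i=0$, which requires only the first moments implicit in the definition of $A$, not any higher ``standing moment assumptions''). This suffices: from the martingale identity you get
\begin{equation*}
\E\bigl[Y_n\mid\cF_m\bigr]=\frac{v_2\cdot X_m+\phi_n^{(m)}\,(v_1\cdot X_m)}{\pi_m(\lambda)},
\end{equation*}
and the inequality $\E\bigl|v_2\cdot X_m+\phi_n^{(m)}(v_1\cdot X_m)\bigr|\ge |\phi_n^{(m)}|\,\E|v_1\cdot X_m|-\E|v_2\cdot X_m|$, together with $\E\bigl|\E[Y_n\mid\cF_m]\bigr|\le\E|Y_n|\le C$ and $|\phi_n^{(m)}|\to\infty$, forces $\E|v_1\cdot X_m|=0$, hence $v_1\cdot X_m=0$ a.s. (note you also dropped the $1/\pi_m(\lambda)$ factor in your display, which is harmless since it is a fixed nonzero constant). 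Second, the step you flagged — passing from $v\cdot X_m=0$ a.s.\ for all $m$ to $v\cdot\xi_j=0$ a.s.\ for all drawable $j$ — does go through: on $\{I_{m+1}=j\}$, which has positive probability for some $m$ by the appearance hypothesis and $a_j>0$, the conditional law of $\Delta X_m$ is exactly that of $\xi_j$ and is independent of $\cF_m$ by construction~\ref{Po2}, so $v\cdot\Delta X_m=0$ a.s.\ transfers to $v\cdot\xi_j=0$ a.s.; and for $a_j=0$ the eigenvalue identity $\lambda v_j=a_j\E(v\cdot\xi_j)=0$ closes the case directly. With these small clarifications your argument is sound, including the Jordan-chain reduction, which correctly feeds $v_1\cdot X_m\equiv0$ back into the first-part mechanism to obtain $v_1=0$.
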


The eigenvalue $b$ may be multiple; a well-known example is the classical
\Polya{} urn for which $A=bI$, and thus all $q$ eigenvalues are equal to $b$.
In most other applications, the eigenvalue $b$ is simple.
This implies that the corresponding left and right eigenspaces are
1-dimensional, and thus the corresponding left and right eigenvectors $u_1$
and $v_1$ are unique up to constant factors.
By \eqref{bala}, $a'$ is a left eigenvector so we may then take $u_1=a$.
Furthermore, we have the following  general result from linear algebra; 
for completeness we give a proof in \refApp{AP} 
since we do not know a good reference.
\begin{lemma}\label{Lsimple}
  Suppose that the eigenvalue $\gl_1=b$ is simple.
Then there is a unique right eigenvector $v_1$ with
\begin{align}
\label{normalized}
u_1\cdot v_1=a\cdot v_1=1.
\end{align}
Furthermore, the projection $P_{\gl_1}$ is given by
\begin{equation}
  \label{pl1}
P_{\gl_1}=v_1u_1'.
\end{equation}
Consequently, 
for any vector $v\in\bbC^q$,
\begin{equation}\label{pl2}
  P_{\gl_1}v=v_1u_1'v=v_1a'v=(a\cdot v)v_1.
\end{equation}
\end{lemma}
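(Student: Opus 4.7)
The plan is to exploit simplicity of $\gl_1=b$ to pin down the spectral projection $P_{\gl_1}$ as a rank-one operator; both the normalization of $v_1$ and the identity $P_{\gl_1}=v_1u_1'$ can then be read off from the algebra of rank-one matrices.

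First I would collect the structural facts. By simplicity the right eigenspace $E_{\gl_1}$ is one-dimensional, so a nonzero right eigenvector exists and is unique up to a scalar; call any such vector $v_1$ for now, with the precise scaling to be fixed at the end. Also, \refL{Lapp} gives $\nu_{\gl_1}=0$, hence $N_{\gl_1}=0$ and \eqref{24b} reduces to $AP_{\gl_1}=\gl_1 P_{\gl_1}$. Since $P_{\gl_1}$ is a nonzero projection whose image is the one-dimensional space $E_{\gl_1}=\bbC v_1$, it has rank one, and therefore admits a factorization $P_{\gl_1}=v_1 w'$ for some $w\in\bbC^q$.

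Next I would identify $w$ and fix the scaling of $v_1$. The idempotence $P_{\gl_1}^2=P_{\gl_1}$ forces the scalar $w\cdot v_1$ to equal $1$. Applying $P_{\gl_1}A=\gl_1 P_{\gl_1}$ to the factorization gives $v_1(A'w-\gl_1 w)'=0$, so $w$ is a left eigenvector of $A$ for the eigenvalue $\gl_1=b$. Since $\gl_1$ is simple and $u_1=a$ spans the left eigenspace by \eqref{bala}, $w=c\,a$ for some scalar $c$; combined with $w\cdot v_1=1$ this yields $c(a\cdot v_1)=1$, so in particular $a\cdot v_1\neq 0$. Rescaling $v_1$ by the scalar $a\cdot v_1$, we may assume $a\cdot v_1=u_1\cdot v_1=1$, which is \eqref{normalized}, and this also establishes uniqueness. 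With this normalization $c=1$, so $w=a=u_1$, and substituting back gives $P_{\gl_1}=v_1 u_1'$, i.e.~\eqref{pl1}. Finally \eqref{pl2} is obtained by applying this matrix identity to an arbitrary $v\in\bbC^q$.

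The only potentially delicate point is showing $a\cdot v_1\neq 0$; everything else is routine rank-one algebra together with the spectral identities \eqref{pl}--\eqref{24b}. The bootstrap above handles this nonvanishing: the factorization $P_{\gl_1}=v_1 w'$ together with $P_{\gl_1}A=\gl_1 P_{\gl_1}$ forces $w\in\bbC\,a$, and then the relation $w\cdot v_1=1$ coming from idempotence rules out $a\cdot v_1=0$.
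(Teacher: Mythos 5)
Your proof is correct, but it takes a genuinely different and in fact cleaner route than the paper's. The paper first proves $u_1\cdot v_1\neq0$ by showing that $u_1$ annihilates every other generalized eigenspace (via $(A-\gl I)^qP_\gl=0$ and $u_1'(A-\gl I)=(\gl_1-\gl)u_1'$) and then appealing to the direct-sum decomposition $\bbC^q=\bigoplus_\gl E_\gl$; only afterwards does it write $P_{\gl_1}v=(u\cdot v)v_1$ and identify $u=u_1$ by arguing that $u_1'P_{\gl_1}$ is a left eigenvector fixed by the projection. You instead start from the rank-one factorization $P_{\gl_1}=v_1w'$, read off $w\cdot v_1=1$ from idempotence, and read off that $w$ is a left eigenvector from $P_{\gl_1}A=\gl_1P_{\gl_1}$; simplicity then pins $w$ down as a multiple of $a$, and the nonvanishing $a\cdot v_1\neq0$ falls out as a byproduct rather than having to be established separately. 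Your version avoids the direct-sum orthogonality argument entirely and delivers \eqref{pl1} and \eqref{normalized} in one pass, which is arguably more economical. One minor point: you cite \refL{Lapp} to get $\nu_{\gl_1}=0$, but this is unnecessary — simplicity means the generalized eigenspace $E_{\gl_1}$ itself is one-dimensional (the algebraic multiplicity equals $\dim E_{\gl_1}$), which already forces $N_{\gl_1}=0$ and makes $P_{\gl_1}$ rank one without any appeal to tenability or balance.
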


\section{Main results}\label{Smain}
We state here our main results, using the notation and assumptions above.
Proofs are given in \refS{Spf1}.
We begin with a general upper bound for the moments.
\begin{theorem}\label{T2}
Assume that the \Polya{} urn is tenable and balanced with $\gl_1=b$.
  Let $p\ge2$ and suppose that
  \begin{align}
    \label{t1p}
\E|\xi_{ij}|^p<\infty 
\qquad\text{for all $i,j\in[q]$}.
  \end{align}
Then,
for every $n\ge2$,
  \begin{align}\label{t2}
\bignorm{X_n-\E X_n}_p
\le
    \begin{cases}
 C_p n\qq,& \Re{\gl_2} <\gl_1/2,\\
 C_p n\qq (\log n)^{\nu_{2}+\frac12},& \Re{\gl_2} =\gl_1/2,\\
 C_p n^{\Re {\gl_2}/\gl_1} (\log n)^{\nu_{2}},& \Re{\gl_2} >\gl_1/2,
    \end{cases}
  \end{align}
for some constant $C_p$ not depending on $n$.
\end{theorem}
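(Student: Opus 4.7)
The plan is to write $X_n - \E X_n$ as a deterministic-coefficient sum of martingale-difference fluctuations, apply a vector-valued Burkholder--Davis--Gundy (BDG) bound, and then control the coefficient matrices eigenspace by eigenspace using the spectral decomposition of $A$.

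Since the urn is balanced, $w_n := a\cdot X_n = a\cdot X_0 + bn$ is deterministic, and the one-step identity $\E[X_{n+1}\mid\cF_n] = (I + A/w_n)X_n$ together with the martingale-difference $\zeta_n := \Delta X_n - \E[\Delta X_n\mid\cF_n]$ gives
\[
X_{n+1} - \E X_{n+1} = (I + A/w_n)(X_n - \E X_n) + \zeta_n.
\]
Iterating, with $C_{n,k} := \prod_{j=k+1}^{n-1}(I + A/w_j)$, yields $X_n - \E X_n = \sum_{k=0}^{n-1} C_{n,k}\,\zeta_k$. For each fixed $n$, the partial sums in $k$ form a martingale, since the $C_{n,k}$ are deterministic and $(\zeta_k)$ is a martingale-difference sequence. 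Moreover,
\[
\E\bigsqpar{|\zeta_k|^p\mid\cF_k} \le 2^p\sum_i \tfrac{a_iX_{ki}}{w_k}\,\E|\xi_i|^p \le C_p,
\]
using assumption \eqref{t1p} and $\sum_i a_iX_{ki}/w_k = 1$, so $\norm{\zeta_k}_p$ is bounded uniformly in $k$.

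Next, vector-valued BDG followed by the triangle inequality in $L^{p/2}$ (valid for $p\ge2$), projected onto a generalized eigenspace $E_\lambda$ (and using that $P_\lambda$ commutes with $A$, hence with each $C_{n,k}$), gives
\[
\bignorm{P_\lambda(X_n-\E X_n)}_p
\le C_p \Bigpar{\sum_{k=0}^{n-1}\bignorm{C_{n,k}|_{E_\lambda}}^2}^{1/2}.
\]
The decisive ingredient is the operator norm of $C_{n,k}|_{E_\lambda}$: writing $A|_{E_\lambda} = \lambda I + N_\lambda$ with $N_\lambda$ nilpotent of index $\nu_\lambda+1$, one factors
\[
C_{n,k}|_{E_\lambda}=\prod_{j=k+1}^{n-1}\bigpar{1+\lambda/w_j}\cdot\prod_{j=k+1}^{n-1}\Bigpar{I + \tfrac{N_\lambda}{w_j+\lambda}}.
\]
The scalar factor is of modulus $\asymp (n/k)^{\Re\lambda/b}$, while the second factor is a polynomial of degree $\nu_\lambda$ in $N_\lambda$ whose coefficients are elementary symmetric functions in $(w_j+\lambda)^{-1}$ and hence of size $O((\log(n/k)+1)^{\nu_\lambda})$. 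This gives $\norm{C_{n,k}|_{E_\lambda}} \le C\,(n/k)^{\Re\lambda/b}(\log(n/k)+1)^{\nu_\lambda}$. Substituting and evaluating $\sum_{k=1}^{n-1}(n/k)^{2\Re\lambda/b}(\log(n/k)+1)^{2\nu_\lambda}$ splits into three regimes: $O(n)$ when $\Re\lambda<b/2$, $O(n(\log n)^{2\nu_\lambda+1})$ when $\Re\lambda=b/2$, and $O(n^{2\Re\lambda/b}(\log n)^{2\nu_\lambda})$ when $\Re\lambda>b/2$. Taking square roots and summing over $\lambda\in\gsa\setminus\set{\gl_1}$ reproduces the three cases of \eqref{t2}, with the dominant contribution coming from $\lambda=\gl_2$.

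Finally, when $\gl_1$ is simple, \refL{Lsimple} gives $P_{\gl_1}\zeta_k = (a\cdot\zeta_k)v_1 = 0$ by the balance condition, so the $\gl_1$-eigenspace contributes nothing. If $\gl_1$ has higher multiplicity, then $\gl_2=\gl_1$, the urn is automatically in the third (large) regime, \refL{Lapp} forces $\nu_{\gl_1}=0$, and the crude $O(n)$ bound from the $\gl_1$-block is absorbed into the stated $O(n^{\Re\gl_2/\gl_1})$. The main technical obstacle I anticipate is preserving the ratio $n/k$ inside the estimate of $\norm{C_{n,k}|_{E_\lambda}}$: bounding instead $\norm{C_{n,k}}\le\norm{B_n}\,\norm{B_k\qw}$ (with $B_n:=\prod_{j<n}(I+A/w_j)$) would introduce an extra $(\log n)^{\nu_\lambda}$ factor at criticality, yielding $(\log n)^{2\nu_\lambda+1/2}$ rather than the sharp $(\log n)^{\nu_\lambda+1/2}$ in \eqref{t2}. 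Keeping all the $C_{n,k}$ inside the square function before taking norms is what makes the sharpening possible.
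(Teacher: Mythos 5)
Your proposal is correct and follows essentially the same route as the paper: the same martingale-difference decomposition $X_n-\E X_n=\sum_k C_{n,k}\zeta_k$ (the paper's \eqref{gw}), the same Burkholder-type $L^p$ inequality (\refL{LL2}), and the same spectral matrix estimates (\refLs{Lsof} and~\ref{Lsoff}), with the $\gl_1$ contribution annihilated via $a\cdot\zeta_k=0$ when $\gl_1$ is simple. One small inaccuracy: \refL{Lapp} carries an extra hypothesis (every colour may appear with positive probability) not assumed in \refT{T2}, so you cannot invoke it to force $\nu_{\gl_1}=0$ when $\gl_1$ is multiple; but this is harmless, since in that case $\gl_2=\gl_1$ and $\nu_2=\nu_{\gl_1}$, so your block estimate $O(n(\log n)^{\nu_{\gl_1}})$ already matches the stated bound --- or one may simply dispose of the case $\Re\gl_2=\gl_1$ by Minkowski's inequality directly, as the paper does.
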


As said in the introduction,
in many cases the asymptotic distribution of the urn is known. 
Furthermore, for an irreducible urn, under some technical conditions,
the bound in \eqref{t2} equals (apart from the constant $C_p$) the right
normalizing factor, see for example \cite[Theorems 3.22--24]{SJ154}. 
In particular, for a small irreducible urn (again under some conditions),
we have asymptotic normality as in \eqref{t3a} or \eqref{t3b} below.
The following theorem shows that then also all moments (ordinary and absolute)
converge.

\begin{theorem}\label{T3}
Assume that the \Polya{} urn is tenable and balanced,
and
suppose that \eqref{t1p} holds for every $p\ge1$.
\begin{romenumerate}
  
\item \label{T3a}
Assume that\/ $\Re\gl_2<\gl_1/2$ and that, as \ntoo, 
we have asymptotic normality
\begin{align}\label{t3a}
  \frac{X_n-\E X_n}{\sqrt n}\dto N(0,\gS)
\end{align}
for some  matrix $\gS$. Then \eqref{t3a} holds with convergence of
all moments.
In particular, the covariance matrix $\Cov[X_n]/n\to \gS$.

\item \label{T3b}
Assume that\/ $\Re\gl_2=\gl_1/2$ and that, as \ntoo, 
we have asymptotic normality
\begin{align}\label{t3b}
  \frac{X_n-\E X_n}{\sqrt{n(\log n)^{2\nu_2+1}}}\dto N(0,\gS)
\end{align}
for some matrix $\gS$. Then \eqref{t3b} holds with convergence of all moments.
In particular, the covariance matrix 
$\Cov[X_n]/\bigpar{n(\log n)^{2\nu_2+1}}\to\gS$.
\end{romenumerate}
\end{theorem}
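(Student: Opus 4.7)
The plan is to derive \refT{T3} as a direct corollary of \refT{T2} via a standard uniform-integrability argument. Under the hypothesis that \eqref{t1p} holds for every $p\ge1$, \refT{T2} supplies, for each such $p$, a uniform bound $\norm{X_n-\E X_n}_p\le C_p\,d_n$, where $d_n:=n\qq$ in case \ref{T3a} (since $\Re\gl_2<\gl_1/2$) and $d_n:=n\qq(\log n)^{\nu_2+1/2}$ in case \ref{T3b} (since $\Re\gl_2=\gl_1/2$). Crucially, $d_n$ is precisely the normalization used in \eqref{t3a} or \eqref{t3b} respectively, so that setting $Y_n:=(X_n-\E X_n)/d_n$ yields $\sup_{n\ge2}\norm{Y_n}_p<\infty$ for every $p\ge1$.

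Next I would invoke the standard fact that boundedness in $L^{p+1}$ implies uniform integrability of the $L^p$-absolute values. For any fixed $p\ge1$, since $\sup_n\E|Y_n|^{p+1}<\infty$, the family $\{|Y_n|^p\}_n$ is uniformly integrable. Combining this with the assumed convergence in distribution $Y_n\dto Z:=N(0,\gS)$ and the continuous mapping theorem gives $\E|Y_n|^p\to\E|Z|^p$. The same reasoning handles any continuous function of $Y_n$ of polynomial growth: for each multi-index $(i_1,\dots,i_k)$ the monomial $Y_{n,i_1}\cdots Y_{n,i_k}$ converges in distribution to $Z_{i_1}\cdots Z_{i_k}$ by continuous mapping, and H\"older's inequality bounds it in every $L^p$ via $\bignorm{Y_{n,i_1}\cdots Y_{n,i_k}}_p\le\prod_j\norm{Y_{n,i_j}}_{kp}\le\norm{Y_n}_{kp}^k\le C_{kp}^k$; so convergence in distribution upgrades to convergence in mean. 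This delivers convergence of all mixed moments, in particular of all ordinary and absolute moments.

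For the covariance statement, apply the case $k=2$: each entry $\E Y_{n,i}Y_{n,j}$ converges to $\E Z_iZ_j=\gS_{ij}$. Since $\E Y_n=0$ by construction, $\Cov(Y_n)=\E Y_nY_n'\to\gS$, which unwraps to $\Cov(X_n)/n\to\gS$ in case \ref{T3a} and $\Cov(X_n)/\bigpar{n(\log n)^{2\nu_2+1}}\to\gS$ in case \ref{T3b}.

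There is no real obstacle once \refT{T2} is in hand; indeed that theorem has been engineered precisely so that its $L^p$ bounds match the hypothesized CLT normalizations. The only subtlety worth flagging is that one needs \eqref{t1p} for every $p\ge1$ (not merely $p=2$), in order to feed \refT{T2} with arbitrarily high $p$ and thus obtain $L^{p+1}$-boundedness for every target moment $p$; this is exactly the hypothesis imposed on \refT{T3}. The vector-valued formulation adds no genuine difficulty, since passage from scalar moments to joint moments is handled uniformly by the same H\"older-plus-uniform-integrability pattern.
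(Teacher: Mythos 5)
Your argument is correct and follows essentially the same route as the paper: invoke the moment bounds of \refT{T2} (which the paper obtains from \refT{T1}) to get $L^p$-boundedness of the normalized fluctuations for every $p$, deduce uniform integrability of the $p$th powers, and combine with the assumed distributional convergence to upgrade to moment convergence. Your extra details on mixed moments via H\"older and the explicit covariance computation are harmless elaborations of the same standard argument the paper compresses into two sentences.
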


We may also estimate different spectral projections separately and get a
sharper result than \refT{T2}.

\begin{theorem}\label{T1}
Assume that the \Polya{} urn is tenable and balanced.
  Let $p\ge2$ and suppose that \eqref{t1p} holds.
Let $\gl$ be an eigenvalue of $A$. Then,
for every $n\ge2$,
  \begin{align}\label{t1}
\bignorm{P_\gl(X_n-\E X_n)}_p
\le
    \begin{cases}
 C_p n\qq,& \Re\gl <b/2,\\
 C_p n\qq (\log n)^{\nu_\gl+\frac12},& \Re\gl =b/2,\\
 C_p n^{\Re \gl/b} (\log n)^{\nu_\gl},& \Re\gl >b/2,
    \end{cases}
  \end{align}
for some constant $C_p$ not depending on $n$.
\end{theorem}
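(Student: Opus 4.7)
The plan is to prove \refT{T1} by decomposing $X_n-\E X_n$ into a matrix-weighted martingale sum, applying a Burkholder-type inequality, and reducing everything to an operator-norm bound for a deterministic matrix product on the generalized eigenspace $E_\gl$.

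Setting $\cF_n:=\sigma(X_0,\dots,X_n)$ and $w_n:=a\cdot X_n$, the balance condition \eqref{balanced} together with assumption \ref{Po0} gives $w_n=w_0+bn$, a deterministic scalar. A direct computation from \eqref{urn} and the definition \eqref{A} of the intensity matrix yields $\E[X_{n+1}\mid\cF_n]=(I+A/w_n)X_n$. Writing $D_{n+1}:=X_{n+1}-\E[X_{n+1}\mid\cF_n]$ and iterating the recursion $X_{n+1}=(I+A/w_n)X_n+D_{n+1}$, then subtracting $\E X_n$, I obtain, with $F_{k,n}:=\prod_{j=k}^{n-1}(I+A/w_j)$ (and $F_{n,n}:=I$),
\begin{equation*}
X_n-\E X_n=\sum_{k=1}^n F_{k,n}D_k.
\end{equation*}
Since $P_\gl$ commutes with $A$ and hence with every $F_{k,n}$, applying $P_\gl$ produces a $\bbC^q$-valued martingale in the upper summation index. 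The Burkholder--Davis--Gundy inequality applied coordinate-wise on $\bbC^q$ followed by Minkowski's inequality in $L^{p/2}$ yields
\begin{equation*}
\bignorm{P_\gl(X_n-\E X_n)}_p\le C_p\biggpar{\sum_{k=1}^n\bignorm{F_{k,n}P_\gl D_k}_p^2}^{1/2}.
\end{equation*}
Conditionally on $\cF_{k-1}$, the increment $D_k$ is a centred version of some $\xi_i$, so \eqref{t1p} gives $\norm{D_k}_p\le C_p$ uniformly in $k$, and hence $\norm{F_{k,n}P_\gl D_k}_p\le C_p\norm{F_{k,n}P_\gl}$.

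The technical heart of the argument, to be established in \refS{Smatrix}, is the deterministic operator bound
\begin{equation*}
\bignorm{F_{k,n}P_\gl}\le C\bigpar{n/k}^{\Re\gl/b}\bigpar{1+\log(n/k)}^{\nu_\gl},\qquad 1\le k\le n.
\end{equation*}
Because $AP_\gl=\gl P_\gl+N_\gl$ with $N_\gl$ nilpotent of index $\nu_\gl+1$, each factor restricted to $E_\gl$ equals $(1+\gl/w_j)I+N_\gl/w_j$; the product of the scalar parts is asymptotic to $(n/k)^{\gl/b}$, while the cross-terms involving $N_\gl/w_j$ assemble, via $\sum_{j=k}^{n-1}w_j\qw\asymp b\qw\log(n/k)$, into a polynomial of degree at most $\nu_\gl$ in $\log(n/k)$. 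Inserting this bound into the Burkholder estimate reduces \refT{T1} to evaluating the deterministic sum $S_n:=\sum_{k=1}^n(n/k)^{2\Re\gl/b}(1+\log(n/k))^{2\nu_\gl}$; the substitution $u=n/k$ and comparison with $\int_1^n u^{2\Re\gl/b-2}(\log u)^{2\nu_\gl}\,du$ show that $S_n$ is $O(n)$, $O(n(\log n)^{2\nu_\gl+1})$, or $O(n^{2\Re\gl/b}(\log n)^{2\nu_\gl})$ in the three regimes $\Re\gl<b/2$, $=b/2$, $>b/2$, respectively. Taking square roots recovers the three cases of \eqref{t1}.

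The main obstacle is the matrix-norm estimate for $F_{k,n}P_\gl$: although the factors $I+A/w_j$ commute (being polynomials in $A$), the lack of diagonalizability of $A|_{E_\gl}$ when $\nu_\gl\ge1$ forces one to track carefully how the nilpotent contribution $N_\gl/w_j$ accumulates along the product, and to verify that its cumulative effect is merely the polylogarithmic factor $(1+\log(n/k))^{\nu_\gl}$ on top of the leading scalar factor $(n/k)^{\Re\gl/b}$. Once this linear-algebraic lemma is secured, the remainder of the argument is a mechanical combination of standard martingale moment inequalities and the elementary sum estimate above.
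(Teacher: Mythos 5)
Your proposal follows essentially the same route as the paper: the same decomposition $X_n-\E X_n=\sum_{k=1}^n F_{k,n}Y_k$ into a matrix-weighted martingale sum, the same Burkholder-plus-Minkowski reduction to $\bigl(\sum_k\norm{P_\gl F_{k,n}}^2\bigr)^{1/2}$, the same scalar-times-nilpotent factorization to get the deterministic bound $\norm{P_\gl F_{k,n}}\le C(n/k)^{\Re\gl/b}(1+\log(n/k))^{\nu_\gl}$, and the same integral comparison in the three regimes. The only difference is cosmetic: you inline the Burkholder step where the paper isolates it as Lemma~\ref{LL2}.
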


As in \refT{T3}, one can often combine \refT{T1} with a central limit result
for a specific components $P_\gl(X_n-\E X_n)$ (or for suitable linear
combinations $u\cdot(X_n-\E X_n)$) and obtain moment convergence in the
latter results, cf.\ \cite[Remark 3.25]{SJ154} and 
\cite[Theorem 3]{AthreyaKarlin}; we leave the details to the reader.

For the special case $\gl=\gl_1$, and further assuming that  this eigenvalue is
simple, we have  as a complement the following almost
trivial result.

\begin{theorem}\label{T0}
Assume that the \Polya{} urn is tenable and balanced 
and  that $\gl_1=b$ is a simple eigenvalue. Then
  \begin{align}\label{t0}
P_{\gl_1}(X_n-\E X_n)=0.    
  \end{align}
\end{theorem}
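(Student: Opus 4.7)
The plan is to exploit the balance condition together with the explicit form of the spectral projection $P_{\gl_1}$ given in \refL{Lsimple}. Since $\gl_1=b$ is simple, formula \eqref{pl2} yields
\begin{equation*}
P_{\gl_1}(X_n-\E X_n)=\bigpar{a\cdot(X_n-\E X_n)}v_1,
\end{equation*}
so it suffices to show that the scalar quantity $a\cdot X_n$ is nonrandom. If that holds, then $a\cdot X_n=a\cdot\E X_n$, the factor in parentheses vanishes, and \eqref{t0} follows immediately.

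To establish that $a\cdot X_n$ is deterministic, I would argue by induction on $n$. The base case is $n=0$, which holds by assumption \ref{Po0} ($X_0$ is nonrandom). For the inductive step, write $X_{n+1}=X_n+\gD X_n$, where $\gD X_n$ is distributed as $\xi_i$ conditional on drawing colour~$i$. The balance condition \eqref{balanced} says that $a\cdot\xi_i=b$ almost surely for every $i$, so $a\cdot\gD X_n=b$ regardless of which colour is drawn and regardless of the random realization of the replacement vector. Therefore $a\cdot X_{n+1}=a\cdot X_n+b$, which is nonrandom by the induction hypothesis. (In fact, one obtains the closed form $a\cdot X_n=a\cdot X_0+nb$, matching \eqref{wn}--\eqref{wn1} referenced earlier.)

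There is no real obstacle here: the result is essentially a restatement of the fact that the total weight in a balanced urn grows deterministically, combined with the observation from \refL{Lsimple} that the top spectral projection acts through the linear functional $v\mapsto a\cdot v$. The only thing to verify carefully is that the balance identity $a\cdot\xi_i=b$ really does make $a\cdot\gD X_n$ a.s.\ constant even when $\xi_i$ itself is random; but this is immediate from \ref{Pobal}.
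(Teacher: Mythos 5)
Your proposal is correct and uses essentially the same argument as the paper: the total weight $a\cdot X_n=w_n$ is deterministic (which the paper has already noted in \eqref{wn}--\eqref{wn1} and you re-derive by induction), and then \eqref{pl2} from \refL{Lsimple} immediately gives \eqref{t0}.
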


When $\Re\gl\le b/2$, we have under quite general condition
asymptotic normality of $P_\gl(X_n-\E X_n)$, for example as a consequence
of \cite[Theorem 3.15]{SJ154}.
In such cases, 
we obtain from \refT{T1} also moment convergence of $P_\gl(X_n-\E X_n)$, by
the same argument as in the proof of \refT{T3}.

\begin{remark}
  The assumption \eqref{t1p} on finite moments of the replacements is a very
  weak restriction in the balanced case studied here.
For example, 
for a balanced and tenable urn such that
every colour may appear with positive probability 
(which we may assume without loss of generality) and all activities $a_i>0$,
we necessarily have all $\xi_{ij}$ bounded by some constant, and thus
\eqref{t1p} holds, 
see \cite[Remark 2.5]{SJ307}.
Nevertheless, we note for completeness that if we assume only that
\eqref{t1p} holds for a single $p\ge2$, then our proof of \refT{T3} in
\refS{Spf1} yields only moment convergence for 
moments of order strictly less that $p$; however, we show in \refApp{Aui} a
modification of the argument which yields convergence also of moments of
order $p$ in this case.
\end{remark}

\section{Proofs of main results}\label{Spf1}
We prove here the main results in \refS{Smain}, using some 
lemmas postponed to the following sections.
We begin with standard arguments, partly copying from \cite{SJ307}.

Let $I_n$ be the colour of the $n$-th drawn ball, and let
\begin{equation}\label{gDX}
  \gD X_n:=X_{n+1}-X_n
\end{equation}
and
\begin{equation}\label{wn}
  w_n:=a\cdot X_n,
\end{equation}
the total weight (activity) of the urn.
We note first that the assumption that the urn is balanced implies
$a\cdot\gD X_n=b$ a.s., and thus \eqref{gDX}--\eqref{wn} imply that $w_n$ is
deterministic with
\begin{equation}\label{wn1}
  w_n=w_0+nb,
\end{equation}
where the initial weight $w_0=a\cdot X_0$.

Next, let $\cF_n$ be the $\gs$-field generated by $X_1,\dots,X_n$.
Then, by the definition of the urn,
\begin{equation}\label{pin}
  \P\bigpar{I_{n+1}=j\mid \cF_n} = \frac{a_jX_{nj}}{w_n}
\end{equation}
and consequently, recalling \eqref{A},
\begin{equation}\label{mia}
  \begin{split}
\E\bigpar{\gD X_n\mid \cF_n}
&=\sumjq \P\bigpar{I_{n+1}=j\mid \cF_n}\E\xi_j	
=\frac{1}{w_n}\sumjq a_j X_{nj}\E\xi_j	
\\&
=\frac{1}{w_n}\biggpar{\sumjq (A)_{ij} X_{nj}}_i
=\frac{1}{w_n} A X_{n}.
  \end{split}
\end{equation}

Define
\begin{equation}\label{Yn}
  Y_n:=\gD X_{n-1} - \E\bigpar{\gD X_{n-1}\mid \cF_{n-1}},
\qquad n\ge1.
\end{equation}
Then, $Y_n$ is $\cF_n$-measurable and, obviously,
\begin{equation}
  \label{eyn}
\E\bigpar{Y_n\mid\cF_{n-1}}=0.
\end{equation}
In other words, $(Y_n)\xoo$ is a martingale difference sequence.

Furthermore, similarly to \eqref{mia} and using the assumption \eqref{t1p},
\begin{align}\label{sjw}
\E\bigpar{|\gD X_n|^p\mid \cF_n}
&=\sumjq \P\bigpar{I_{n+1}=j\mid \cF_n}\E|\xi_j|^p	
\le C_p
\quad\text{a.s.}
\end{align}
Hence,
$ \E|\gD X_n|^p \le C_p$, or equivalently
\begin{align}\label{Xnp}
  \norm{\gD X_n}_p\le C_p,
\end{align}
which by \eqref{Yn} implies
\begin{align}\label{Ynp}
  \norm{Y_n}_p\le C_p.
\end{align}

By \eqref{gDX}, \eqref{Yn} and \eqref{mia},
\begin{equation}
  X_{n+1}=X_n+Y_{n+1}+w_n\qw A X_n
=\bigpar{I+w_n\qw A}X_n+Y_{n+1}.
\end{equation}
Consequently, by induction, for any $n\ge0$,
\begin{equation}\label{tia}
  X_n=\prod_{k=0}^{n-1}\bigpar{I+w_k\qw A}X_0
  +\sum_{\ell=1}^{n}\prod_{k=\ell}^{n-1}\bigpar{I+w_k\qw A}Y_\ell,
\end{equation}
where (as below) an empty matrix product is interpreted as $I$.

We define the (deterministic) matrix products
\begin{equation}\label{qia}
  F_{i,j}:=\prod_{i\le k<j}\bigpar{I+w_k\qw A},
\qquad 0\le i\le j,
\end{equation}
and write  \eqref{tia} as
\begin{equation}
  \label{kia}
  X_n=F_{0,n} X_0+\sum_{\ell=1}^{n}F_{\ell,n} Y_{\ell}.
\end{equation}

Taking the expectation we find, since $\E Y_\ell=0$ by \eqref{eyn}, 
and the $F_{i,j}$ and $X_0$ are nonrandom,
\begin{equation}
  \label{EX}
\E X_n=F_{0,n} X_0.
\end{equation}
Hence, \eqref{kia} can also be written
\begin{equation}\label{gw}
  X_n-\E X_n =\sum_{\ell=1}^{n}F_{\ell,n} Y_\ell.
\end{equation}

\begin{proof}[Proof of \refT{T1}]
It follows from \eqref{gw} that
\begin{equation}\label{gwp}
P_\gl( X_n-\E X_n) =\sum_{\ell=1}^{n}P_\gl F_{\ell,n} Y_\ell.
\end{equation}
We now use \refL{LL2} below
and
conclude using \eqref{Ynp} that
\begin{equation}\label{hgwp}
\bignorm{P_\gl(X_n-\E X_n)}_p
\le C_p \biggpar{\sum_{\ell=1}^{n}\bignorm{P_\gl F_{\ell,n}}^2}\qq
.\end{equation}
The result follows by \refL{Lsoff}.
\end{proof}

\begin{proof}[Proof of \refT{T0}]
  By \eqref{wn} and \eqref{wn1}, $a\cdot X_n=w_n$ is deterministic, and thus
  \begin{align}
a\cdot(X_n-\E X_n)=a\cdot X_n -E(a\cdot X_n)=0.    
  \end{align}
The result now follows from \eqref{pl2} in \refL{Lsimple}.
\end{proof}

\begin{proof}[Proof of \refT{T2}]
  First, if $\Re\gl_2=\gl_1=b$,  then we simply use Minkowski's inequality,
  which by \eqref{Xnp} yields
  \begin{align}
    \norm{X_n}_p
\le 
    \norm{X_0}_p + \sum_{i=0}^{n-1} \norm{\gD X_i}_p
\le C_p+C_p n
\le C_p n.
  \end{align}
Hence, $\norm{\E X_n}_p\le\norm{X_n}_p\le C_p n$ and 
$\norm{X_n-\E X_n}\le C_p n$, which is \eqref{t2} in the case $\Re\gl_2=\gl_1$.

In the rest of the proof, suppose instead that $\Re\gl_2<\gl_1$. In
particular (since eigenvalues are counted with multiplicities), $\gl_1$ is a
simple eigenvalue.
Thus \refT{T0} applies and shows
$P_{\gl_1}(X_n-\E X_n)=0$. The decomposition \eqref{pl} thus yields
\begin{align}
  X_n-\E X_n = \sum_{\gl\neq\gl_1}P_\gl(X_n-\E X_n),
\end{align}
and Minkowski's inequality yields
\begin{align}
\norm{X_n-\E X_n}_p \le \sum_{\gl\neq\gl_1} \norm{P_\gl(X_n-\E X_n)}_p.
\end{align}
We estimate the terms on the \rhs{} by \refT{T1}; the  contribution
from $\gl=\gl_2$ dominates all others, and we obtain \eqref{t2}.
\end{proof}

\begin{proof}[Proof of \refT{T3}]
  It follows from \refT{T1} that for every $p\ge1$, the $L^p$ norms of
  the \lhs{s} of \eqref{t3a} and \eqref{t3b} are bounded as \ntoo.
As is well-known this implies that for every $p\ge1$, the $p$th powers
$|X_n-\E X_n|^p$ are uniformly integrable, and hence convergence of moments
follows from the assumed convergence in distribution.
\end{proof}

\section{A martingale inequality}\label{Smart}
We used above the following martingale inequality, which is a simple
consequence of Burkholder's inequality for the square function.
Since we do not know a reference where this inequality is stated in the form
below, we give a complete proof.
Recall that a \mgds{} is a sequence $(Y_i)\xoo$ of \rv{s} such that
the sequence $\sumin Y_i$, $n\ge1$, is a martingale with respect to some
sequence of $\gs$-fields $\cF_n$. (The $\gs$-fields $\cF_n$ will be fixed
below.)

\begin{lemma}\label{LL2}
Let $p\ge2$ and let\/ $Y_i$, $i\ge1$, be a \mgds{} 
of random vectors in $\bbC^q$
such that\/
$\sup_i \norm{Y_i}_p<\infty$.
Then, for any sequence of (nonrandom) $q\times q$ matrices $(A_i)\ixoo$
such that\/ 
$\sumi\norm{A_i}^2<\infty$, 
the sum
$\sumi A_i Y_i$ converges a.s.\ and in $L^p$, and
\begin{align}\label{ll2}
\biggnorm{\sumi A_i Y_i}_p \le C_p 
\lrpar{\sumi\norm{A_i}^2}\qq
\sup_i\norm{Y_i}_p.
\end{align}
Here $C_p$ is a constant that depends on $p$ and $q$ only.
\end{lemma}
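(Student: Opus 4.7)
The plan is to apply Burkholder's square-function inequality coordinatewise to the martingale $M_n:=\sum_{i=1}^n A_iY_i$, and then bound the resulting square function via Minkowski's inequality in $L^{p/2}$.

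Fix a filtration $(\cF_n)$ with respect to which $(Y_i)\xoo$ is a \mgds{}. Since the matrices $A_i$ are nonrandom, the partial sums $M_n$ form a vector-valued martingale whose $j$-th coordinate has increments $(A_iY_i)_j$. Burkholder's square-function inequality applied separately to the real and imaginary parts of each of the $q$ coordinates, combined with the Euclidean identity $|x|^2=\sum_j|x_j|^2$ on $\bbC^q$, gives for $p\ge2$
\begin{equation*}
\bignorm{M_n}_p \le C_p \Bignorm{\Bigpar{\sum_{i=1}^n |A_iY_i|^2}\qq}_p,
\end{equation*}
with a constant depending on $p$ and $q$ only. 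The operator-norm bound $|A_iY_i|\le\norm{A_i}\,|Y_i|$ and Minkowski's inequality in $L^{p/2}$ (which is a norm since $p/2\ge1$) applied to the nonnegative summands $\norm{A_i}^2|Y_i|^2$ then give
\begin{equation*}
\Bignorm{\sum_{i=1}^n \norm{A_i}^2 |Y_i|^2}_{p/2}
\le \sum_{i=1}^n \norm{A_i}^2 \bignorm{|Y_i|^2}_{p/2}
= \sum_{i=1}^n \norm{A_i}^2 \norm{Y_i}_p^2
\le \Bigpar{\sup_i\norm{Y_i}_p^2}\sum_{i=1}^n\norm{A_i}^2,
\end{equation*}
and taking square roots establishes \eqref{ll2} for each partial sum $M_n$ with a constant independent of $n$.

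For the convergence claim, applying the same estimate to the tail $M_m-M_n=\sum_{i=n+1}^m A_iY_i$ and using $\sum_i\norm{A_i}^2<\infty$ shows that $(M_n)$ is Cauchy in $L^p$ and hence converges there; almost sure convergence then follows from Doob's $L^p$ martingale convergence theorem applied coordinatewise, since $(M_n)$ is $L^p$-bounded with $p>1$. The only subtle step is invoking Burkholder in the vector-valued complex setting: one decomposes into $2q$ scalar real martingales, and the resulting multiplicative loss is absorbed into the constant $C_p$. Everything else is Minkowski in $L^{p/2}$ and standard martingale convergence; no deeper ingredients are needed.
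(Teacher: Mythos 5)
Your proof is correct and takes essentially the same route as the paper's: apply Burkholder's square-function inequality (reduced to the scalar case coordinate by coordinate, exactly as the paper notes in its remark), then use Minkowski's inequality in $L^{p/2}$ on $\sum_i\|A_i\|^2|Y_i|^2$ to bound the square function.
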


\begin{proof}
$X_n:=\sumin A_i Y_i$, $n\ge0$, is a martingale, and its square function is
\begin{align}
  S_n(X):=
\biggpar{\sumin|X_i-X_{i-1}|^2}\qq
=\biggpar{\sumin|A_iY_i|^2}\qq.
\end{align}
  Let $B:=\sup_i\norm{Y_i}_p$.
Minkowski's inequality yields, for any $n\ge1$,
\begin{align}
  \norm{S_n(X)}_p^2&
= \bignorm{S_n(X)^2}_{p/2}
= \biggnorm{\sumin |A_iY_i|^2}_{p/2}
\notag\\&
\le\sumin \bignorm{|A_iY_i|^2}_{p/2}
\le \sumin \norm{A_i}^2\norm{|Y_i|^2}_{p/2}
= \sumin \norm{A_i}^2\norm{Y_i}_{p}^2
\notag\\&
\le B^2 \sumin \norm{A_i}^2
\le B^2 \sumi \norm{A_i}^2,
\end{align}
\ie, $  \norm{S_n(X)}_p\le B \bigpar{\sumi \norm{A_i}^2}\qq$.
We combine this with Burkholder's inequality 
\begin{align}\label{em3}
\norm{X_n}_p \le C_p \norm{S(X)_n}_p 
\end{align}
(valid for any martingale and any $p\ge1$, with some constant $C_p$
depending on $p$ only),
see
\cite[Theorem 9]{Burkholder} or e.g.\
\cite[Theorem 10.9.5]{Gut}.
This yields
\begin{align}\label{ll2a}
  \norm{X_n}_p \le C_p \norm{S_n(X)}_p\le C_pB \biggpar{\sumi \norm{A_i}^2}\qq.
\end{align}
Consequently, 
the martingale $(X_n)\xoo$ is $L^p$ bounded, and thus converges \as{}
and in $L^p$ to a limit, which satisfies \eqref{ll2} by \eqref{ll2a}.
\end{proof}

\begin{remark}
We have in the proof above used a vector-valued version of Burkholder's
  inequality \eqref{em3};
this follows immediately
(with a constant $C_p$ depending on $q$)
from the scalar-valued version in the references above applied to each
component. 
In fact (for $p\ge2$),
\eqref{em3} holds with $C_p=p-1$ independent of the dimension $q$,
and, in fact, more generally for Hilbert-space-valued martingales, see
\cite[Theorem 3.3]{Burkholder91}.
\end{remark}

\section{Matrix estimates}\label{Smatrix}
We prove here some matrix estimates used in the proofs above.
In this section, we may as well be general and let
$A$ be any complex $q\times q$ matrix.
Let $P_\gl$, $\gl\in\gs(A)$,
be its spectral projections as in \eqref{pl}--\eqref{24b}, 
and let $\nu_\gl+1$ be the dimension of the largest Jordan block
for the eigenvalue $\gl$.

Furthermore, we assume that $w_0$ and $b>0$ are some given positive numbers.
We then define $w_n:=w_0+nb>0$ as in \eqref{wn1}, 
and define the matrices $F_{i,j}$ by \eqref{qia}.
\begin{lemma}\label{Lsof}
With notations as above, for every eigenvalue $\gl\in\gs(A)$,
\begin{align}\label{lsof}
  \norm{P_\gl F_{i,j}}
\le C \parfrac{j}{i}^{\Re\gl/b}\Bigpar{1+\log \frac ji}^{\nu_\gl},
\qquad 1\le i\le j<\infty
.\end{align}
for some constant $C$ not depending on $i$ and $j$.
\end{lemma}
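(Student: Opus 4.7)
The plan is to exploit that $P_\gl$ commutes with $A$ and thus with each factor $I+w_k\qw A$, so $P_\gl F_{i,j}=F_{i,j}P_\gl$, and it suffices to bound the operator norm of $F_{i,j}$ restricted to the invariant subspace $E_\gl=P_\gl(\bbC^q)$. On $E_\gl$, by \eqref{24b}, $A$ acts as $\gl I+N_\gl$ with $N_\gl^{\nu_\gl+1}=0$. Since $I$ and $N_\gl$ commute, all factors
\begin{equation*}
I+w_k\qw(\gl I+N_\gl) = (1+\gl/w_k)I+w_k\qw N_\gl
\end{equation*}
commute, so the product can be manipulated as in the scalar polynomial ring $\bbC[N_\gl]$.

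Assuming for now that $w_k+\gl\ne0$ for all $k$ in the range (only finitely many $k$ can be exceptional, and those indices can be absorbed into a bounded multiplicative factor), I would factor
\begin{equation*}
F_{i,j}\big|_{E_\gl} = \Bigpar{\prod_{i\le k<j}\bigpar{1+\gl/w_k}}\prod_{i\le k<j}\Bigpar{I+\frac{N_\gl}{w_k+\gl}}.
\end{equation*}
The first, scalar, product is estimated by taking logarithms: since $\log|1+\gl/w_k|=\Re\gl/w_k+O(1/k^2)$ and $\sum_{i\le k<j}1/w_k=b\qw\log(j/i)+O(1)$ (as $w_k=w_0+kb$), this product has absolute value $\le C(j/i)^{\Re\gl/b}$.

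For the nilpotent product, using $N_\gl^{\nu_\gl+1}=0$ and commutativity, it equals
\begin{equation*}
\sum_{m=0}^{\nu_\gl} e_m\bigpar{(w_i+\gl)\qw,\dots,(w_{j-1}+\gl)\qw}\,N_\gl^m,
\end{equation*}
where $e_m$ denotes the elementary symmetric polynomial of degree $m$. Since $|w_k+\gl|\qw\le C/k$,
\begin{equation*}
|e_m|\le\frac{1}{m!}\Bigpar{\sum_{i\le k<j}\frac{C}{k}}^{m}\le \frac{C^m}{m!}\bigpar{\log(j/i)+O(1)}^m,
\end{equation*}
and summing over the fixed finite range $0\le m\le\nu_\gl$ gives a bound of order $(1+\log(j/i))^{\nu_\gl}$. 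Multiplying the two estimates yields \eqref{lsof}.

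The only real nuisance is the handling of the finitely many exceptional indices $k$ with $w_k+\gl$ small or zero, and also making the $O(1)$ terms uniform down to $i=1$; both are routine since $w_k$ is strictly increasing to infinity, so only a bounded initial segment of indices requires separate treatment, and their contribution can be absorbed into the constant $C$. The heart of the argument is the clean separation on $E_\gl$ into a scalar part producing the power $(j/i)^{\Re\gl/b}$ and a nilpotent part producing the polylogarithmic factor $(1+\log(j/i))^{\nu_\gl}$.
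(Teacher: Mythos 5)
Your proposal is correct and follows essentially the same route as the paper's proof: split $P_\gl F_{i,j}$ into a scalar product $\prod(1+\gl/w_k)$ (estimated by logarithms to give the power $(j/i)^{\Re\gl/b}$) times a nilpotent product whose expansion in powers of $N_\gl$ has coefficients given by elementary symmetric polynomials of $(w_k+\gl)^{-1}$, bounded by powers of $\sum 1/k \approx \log(j/i)$, with the finitely many small indices absorbed into the constant. The only cosmetic differences are that the paper passes to Jordan normal form while you work directly on the invariant subspace $E_\gl$, and your bound $|e_m|\le\frac{1}{m!}(\sum\cdot)^m$ is marginally sharper than the paper's $(\sum\cdot)^m$, neither of which affects the result.
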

\begin{proof}
We change basis in $\bbC^q$ so that $A$ is reduced to Jordan normal form.
(This may change the matrix norms, but at most by some multiplicative
constants, which are incorporated in the final $C$ and do not affect the
result.) In this basis, $A$ has one or several Jordan blocks with diagonal
element $\gl$. Multiplying by $P_\gl$ kills all Jordan blocks with
eigenvalue $\neq \gl$, and as a result it suffices to consider a single
Jordan block with eigenvalue $\gl$.
We may thus assume that $A=\gl I+N$ where $N$ is nilpotent, \cf{} \eqref{24b};
more precisely we have $N^{\nu_\gl+1}=0$ by the definition of $\nu_\gl$.
Assume also that $i$ is so large that $w_i\ge 2|\gl|$, say, and thus in
particular 
$w_k+\gl\neq0$ for $k\ge i$.
In this case, by \eqref{qia}, 
\begin{align}\label{sof1a}
   P_\gl F_{i,j}&
=\prod_{i\le k<j}\bigpar{I+w_k\qw(\gl I+N)}
\\&\label{sof1}
= \prod_{i\le k<j}(1+\gl/w_k)
\prod_{i\le k<j}\bigpar{I+(w_k+\gl)\qw N}
.\end{align}
The first product on the \rhs{} of \eqref{sof1} is a complex number which
can be estimated by (since we assume $w_k\ge w_i\ge 2|\gl|$)
\begin{align}\label{sof2}
  \prod_{i\le k<j}(1+\gl/w_k)&
=\exp\biggpar{\sum_{i\le k<j}\log(1+\gl/w_k)}
\notag\\&
=\exp\biggpar{\sum_{i\le k<j}\Bigpar{\frac{\gl}{w_k}
+O\Bigpar{\frac{\gl^2}{w_k^2}}}}
\notag\\&
=\exp\biggpar{\sum_{i\le k<j}\Bigpar{\frac{\gl}{kb}
+O\Bigpar{\frac{1}{k^2}}}}
\notag\\&
=\exp\biggpar{\frac{\gl}{b}\sum_{i\le k<j}\frac{1}{k}
+O(1)}
.\end{align}
Hence,
\begin{align}\label{sof3}
  \biggabs{ \prod_{i\le k<j}(1+\gl/w_k)}&
=\exp\Bigpar{\frac{\Re\gl}{b}\bigpar{\log j -\log i}+O(1)}
\notag\\&
\le C \parfrac{j}{i}^{\Re\gl/b}.
\end{align}

We turn to the final (matrix) product in \eqref{sof1} and expand it into a
polynomial 
$\sum_\ell a_\ell N^\ell$ in $N$. 
The coefficient $a_\ell$ of $N^\ell$ is a sum 
of the product $\prod_{j=1}^\ell(w_{k_j}+\gl)\qw$
over $\ell$-tuples $k_1<\dots< k_\ell$ of indices.
Hence,
\begin{align}\label{sof4}
|  a_\ell|
\le \Bigpar{\sum_{i\le k<j}|w_k+\gl|\qw }^\ell
.\end{align}
We have, similarly to \eqref{sof2},
\begin{align}\label{sof5}
  \sum_{i\le k<j}|w_k+\gl|\qw 
=
  \sum_{i\le k<j}\Bigpar{\frac{1}{bk} + O\Bigpar{\frac{1}{k^2}}}
\le C\Bigpar{1+\log \frac ji}.
\end{align}
Since $N^\ell=0$ for $\ell>\nu_\gl$, we only have to consider $a_\ell
N^\ell$ for $\ell\le \nu_\gl$. Hence, \eqref{sof4}--\eqref{sof5} imply
\begin{align}\label{sof6}
\Bignorm{\prod_{i\le k<j}\bigpar{I+(w_k+\gl)\qw N}}
\le \sum_{\ell=0}^{\nu_\gl}C\Bigpar{1+\log \frac ji}^{\ell}
\le C\Bigpar{1+\log \frac ji}^{\nu_\gl}
.\end{align}
Combining \eqref{sof1}, \eqref{sof3} and \eqref{sof6} we obtain
\eqref{lsof}.

This proof of \eqref{lsof} assumed that $i\ge i_0$ for some $i_0$ such that
$w_{i_0}\ge 2|\gl|$. However, for $i<i_0$ we may use \eqref{lsof} with $i=i_0$
and multiply by the missing factors in \eqref{sof1a}
(which are bounded) to conclude that
\eqref{lsof} holds for all $1\le i\le j<\infty$.
\end{proof}

\begin{lemma}\label{Lsoff}
With notations as above, for every eigenvalue $\gl\in\gs(A)$
and $n\ge2$,
  \begin{align}
    \sum_{i=1}^n\norm{P_\gl F_{i,n}}^2
\le
    \begin{cases}
 C n,& \Re\gl <b/2,\\
 C n \log^{1+2\nu_\gl}n,& \Re\gl =b/2,\\
 C n^{2\Re \gl/b} \log^{2\nu_\gl}n,& \Re\gl >b/2,
    \end{cases}
  \end{align}
for some constant $C$ not depending on $n$.
\end{lemma}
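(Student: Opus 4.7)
The starting point is to square the bound from Lemma~\ref{Lsof}, giving
\begin{align*}
\sum_{i=1}^n \norm{P_\gl F_{i,n}}^2
\le C \sum_{i=1}^n \parfrac{n}{i}^{\ga}\Bigpar{1+\log\frac{n}{i}}^{2\nu_\gl},
\end{align*}
where $\ga:=2\Re\gl/b$. So the entire problem reduces to estimating a deterministic sum $S_n(\ga,\nu):=\sum_{i=1}^n (n/i)^{\ga}(1+\log(n/i))^{2\nu_\gl}$, and comparing it with the appropriate right-hand side of Lemma~\ref{Lsoff}. The plan is to approximate this sum by an integral and then perform the substitution $u=\log(n/x)$ (equivalently $x=ne^{-u}$), which converts the sum to
\begin{align*}
S_n(\ga,\nu) \approx n \int_0^{\log n} e^{(\ga-1)u}(1+u)^{2\nu_\gl}\,du.
\end{align*}

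From here the three cases fall out by elementary analysis of this integral. If $\ga<1$ (i.e.\ $\Re\gl<b/2$), the exponential is decaying, so $\int_0^\infty e^{-(1-\ga)u}(1+u)^{2\nu_\gl}\,du$ converges to a finite constant, yielding $S_n\le Cn$. If $\ga=1$ (i.e.\ $\Re\gl=b/2$), the exponential factor is $1$ and the integral evaluates to $(\log n)^{2\nu_\gl+1}/(2\nu_\gl+1)+O((\log n)^{2\nu_\gl})$, giving $S_n\le Cn(\log n)^{2\nu_\gl+1}$. If $\ga>1$ (i.e.\ $\Re\gl>b/2$), the integrand is dominated by its value near $u=\log n$; a straightforward estimate (or integration by parts) gives $\int_0^{\log n}e^{(\ga-1)u}(1+u)^{2\nu_\gl}\,du\le C n^{\ga-1}(\log n)^{2\nu_\gl}$, hence $S_n\le Cn^{\ga}(\log n)^{2\nu_\gl}=Cn^{2\Re\gl/b}(\log n)^{2\nu_\gl}$.

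To make the sum-to-integral comparison rigorous, I would simply note that the function $x\mapsto (n/x)^{\ga}(1+\log(n/x))^{2\nu_\gl}$ is monotone on suitable subintervals of $[1,n]$, so Riemann sums differ from the integral by at most a constant factor plus an endpoint term (the $i=1$ term contributes at most $n^{\ga}(1+\log n)^{2\nu_\gl}$, which is already absorbed by the target bound in every case). No serious obstacle is expected here; the only minor technical point is handling the case $\ga>1$, where I want to confirm the integral really is $O(n^{\ga-1}(\log n)^{2\nu_\gl})$ rather than acquiring an extra log factor. This is immediate from the bound $\int_0^{L}e^{(\ga-1)u}(1+u)^{2\nu_\gl}du \le (1+L)^{2\nu_\gl}e^{(\ga-1)L}/(\ga-1)$ with $L=\log n$.
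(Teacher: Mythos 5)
Your proof is correct and takes essentially the same approach as the paper: both bound the sum $\sum_{i=1}^n\norm{P_\gl F_{i,n}}^2$ via \refL{Lsof} and then compare to an integral. The paper handles the $\Re\gl\ge b/2$ cases a bit more crudely by first bounding $1+\log(n/i)\le 1+\log n$ and then summing $i^{-2\Re\gl/b}$ directly, whereas you keep the exact kernel and substitute $u=\log(n/x)$, giving a slightly more unified computation, but the underlying argument is the same.
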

\begin{proof}
  This follows from \refL{Lsof} by simple calculations.
Let $\gam:=\Re\gl/b$. 
If $\gam<\frac12$, we obtain from \refL{Lsof}, by comparing the sum to a
convergent integral,
  \begin{align}\label{lsoff1}
\sumin  \norm{P_\gl F_{i,n}}^2&
\le C \sumin \parfrac{i}{n}^{-2\gam}\Bigpar{1+\log \frac ni}^{2\nu_\gl}
\notag\\&
\le C n\intoi x^{-2\gam}\Bigpar{1+\log \frac 1x}^{2\nu_\gl}\dd x
=Cn
.\end{align}
If $\gam=\frac12$ we obtain instead
  \begin{align}\label{lsoff2}
\sumin  \norm{P_\gl F_{i,n}}^2&
\le C n\sumin {i}^{-1}\bigpar{1+\log n}^{2\nu_\gl}
\le C n(\log n)^{1+2\nu_\gl}
.\end{align}
Finally, if $\gam>\frac12$, we obtain
  \begin{align}\label{lsoff3}
\sumin  \norm{P_\gl F_{i,n}}^2&
\le C \sumin \parfrac{n}{i}^{2\gam}\bigpar{\log n}^{2\nu_\gl}
\le C n^{2\gam}\bigpar{\log n}^{2\nu_\gl}\sumi i^{-2\gam}
,\end{align}
where the final sum is convergent.
\end{proof}

\appendix

\section{Uniform integrability}\label{Aui}

The purpose of this appendix is to prove the following version of \refT{T3},
useful in the (unusual) case of a balanced urn 
where  the increments $\xi_i$ have some but not
all moments finite.

\begin{theorem}
  Assume that the conditions of \refT{T3}\ref{T3a} or \ref{T3b} holds,
except that \eqref{t1p} is assumed only for a single $p\ge2$.
Then \eqref{t3a} or \eqref{t3b} holds with convergence of all moments of
order less than or equal to $p$.
\end{theorem}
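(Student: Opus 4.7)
The plan is to adapt the proof of \refT{T3} in \refS{Spf1}: since we no longer have moments of all orders, the conclusion ``$\norm{X_n-\E X_n}_q\le C_q q_n$ for all $q$, hence $|W_n|^p$ is uniformly integrable'' is not available. The strategy is to truncate the martingale differences $Y_\ell$ at a threshold $M$ to recover an \emph{a.s.\ bounded} piece with controlled $L^q$ for every $q$, bound the $L^p$ norm of the residual uniformly in $n$ by a quantity tending to $0$ as $M\to\infty$, and then assemble uniform integrability of the normalized error from these two pieces.

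Concretely, for $M>0$ I would set
\begin{equation*}
\tilde Y_\ell^M := \gD X_{\ell-1}\indic{|\gD X_{\ell-1}|\le M} - \E\bigpar{\gD X_{\ell-1}\indic{|\gD X_{\ell-1}|\le M}\mid \cF_{\ell-1}},
\end{equation*}
so that both $(\tilde Y_\ell^M)$ and $(Y_\ell-\tilde Y_\ell^M)$ are martingale difference sequences and $|\tilde Y_\ell^M|\le 2M$ almost surely. Because $\gD X_{\ell-1}$ is conditionally distributed as $\xi_{I_\ell}$ for some $I_\ell\in[q]$ and $\E|\xi_i|^p<\infty$ for each of the $q$ colours, one has $\sup_\ell \E[|\gD X_{\ell-1}|^p\indic{|\gD X_{\ell-1}|>M}]\to 0$ as $M\to\infty$, and hence
\begin{equation*}
\eps(M) := \sup_\ell\bignorm{Y_\ell-\tilde Y_\ell^M}_p \longrightarrow 0 \qquad\text{as }M\to\infty.
\end{equation*}

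In both parts \ref{T3a} and \ref{T3b} of \refT{T3}, the hypothesis $\Re\gl_2<\gl_1$ forces $\gl_1=b$ to be simple, so \refT{T0} gives $P_{\gl_1}(X_n-\E X_n)=0$; writing $\tilde F_{\ell,n}:=(I-P_{\gl_1})F_{\ell,n}$, \eqref{gw} splits as $X_n-\E X_n = U_n^M + V_n^M$ with $U_n^M:=\sum_\ell \tilde F_{\ell,n}\tilde Y_\ell^M$ and $V_n^M:=\sum_\ell \tilde F_{\ell,n}(Y_\ell-\tilde Y_\ell^M)$. Applying \refL{LL2} to each sum, together with $\sum_\ell\norm{\tilde F_{\ell,n}}^2\le Cq_n^2$ from \refL{Lsoff} summed over $\gl\neq\gl_1$ (where $q_n=\sqrt n$ in case \ref{T3a} and $q_n=\sqrt{n(\log n)^{2\nu_2+1}}$ in case \ref{T3b}), yields
\begin{equation*}
\bignorm{U_n^M}_q \le C_q(M)\,q_n \ \text{ for every }q\ge 2, \qquad \bignorm{V_n^M}_p \le C_p\,\eps(M)\,q_n.
\end{equation*}

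Dividing by $q_n$, the family $\{|U_n^M/q_n|^p\}_n$ is bounded in $L^2$ (take $q=2p$), hence uniformly integrable, while $\sup_n\norm{V_n^M/q_n}_p$ can be made arbitrarily small by choosing $M$ large. A standard Vitali-type lemma then shows that $\{|(X_n-\E X_n)/q_n|^p\}_n$ is itself uniformly integrable, and combined with the assumed convergence in distribution this yields convergence of all moments of order at most $p$. The main obstacle will be this last transfer of uniform integrability: one needs to verify that uniform $L^p$-smallness of $V_n^M$ (after $M$ is taken large) together with uniform $L^{2p}$-boundedness of $U_n^M$ (for that fixed $M$) implies UI of $|U_n^M+V_n^M|^p$, which I expect to follow from a routine but slightly delicate splitting of $\E[|W_n|^p\indic{|W_n|>K}]$ into contributions controlled by the two components, using Markov's inequality to transfer the tail event from $W_n=U_n^M+V_n^M$ onto each piece.
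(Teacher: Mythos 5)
Your proposal is correct and follows essentially the same truncation strategy as the paper: the paper packages the argument into \refL{LLui}, which asserts uniform integrability of $\bigl\{|\sum_i A_iY_i|^p\bigr\}$ over $\sum_i\|A_i\|^2\le1$ when $|Y_i|^p$ are uniformly integrable, and proves it by splitting $Y_i=Y_i'+Y_i''$ at level $M$ (bounded piece plus $L^p$-small residual) and combining \refL{LL2} with H\"older's inequality over events of small probability. You truncate $\Delta X_{\ell-1}$ rather than $Y_\ell$ and carry the split inline with the specific matrices $\tilde F_{\ell,n}$ rather than proving an abstract lemma, but these are cosmetic differences; the final ``Vitali-type'' transfer you flag as the delicate point is exactly what the paper's proof of \refL{LLui} carries out, using arbitrary events $\cE$ with $\P(\cE)\le\delta$ and H\"older with exponent $r/p$ rather than tail events $\{|W_n|>K\}$, which is the cleaner way to finish that step.
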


\begin{proof}
  We argue as in the proof of \refT{T1} above, but use \refL{LLui} below
  instead of \refL{LL2}. 
It follows that if $L_n$ is the random vector on the \lhs{} of \eqref{t1}
and $r_n$ is the function of $n$ on the \rhs, then 
$|L_n/r_n|^p$ forms a \ui{} sequence. It follows as in the
proof of \refT{T2} that the
same holds for \eqref{t2}; in other words, the \lhs{} of \eqref{t3a} or
\eqref{t3b} has \ui{} $p$th powers. Hence convergence of $p$th moments follows
from the assumed convergence in distribution.
\end{proof}

The proof above uses the following lemma, which was shown to me by an
anonymous referee of \cite{SJ307}.
We do not know any reference, so we give a complete proof.

\begin{lemma}\label{LLui}
Let $p\ge2$ and let\/ $Y_i$, $i\ge1$, be a \mgds{} 
of random vectors in $\bbC^q$
such that
the variables $|Y_i|^p$, $i\ge1$, are uniformly integrable.
Then the collection of random variables
\begin{align}\label{llui}
\lrset{\biggabs{\sumi A_i Y_i}^p:A_i\in\bbC^{q\times q},\, \sumi\norm{A_i}^2\le1}
\end{align}
  is \ui. (The first sum in \eqref{llui} converges a.s.)
\end{lemma}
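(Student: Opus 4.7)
The plan is to combine \refL{LL2} with a suitable truncation of the martingale differences $Y_i$ at a threshold $M$: the bounded piece will be controlled in a higher $L^{2p}$ norm, while the tail piece will contribute arbitrarily little in $L^p$ when $M$ is large.

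Concretely, I would split $Y_i = Y_i^M + \tilde Y_i^M$ where
\begin{align*}
Y_i^M &:= Y_i \ett{|Y_i| \le M} - \E\bigpar{Y_i \ett{|Y_i| \le M} \mid \cF_{i-1}}, \\
\tilde Y_i^M &:= Y_i \ett{|Y_i| > M} - \E\bigpar{Y_i \ett{|Y_i| > M} \mid \cF_{i-1}}.
\end{align*}
Since $\E(Y_i \mid \cF_{i-1}) = 0$, both $(Y_i^M)\xoo$ and $(\tilde Y_i^M)\xoo$ are martingale difference sequences; by construction $|Y_i^M| \le 2M$ \as, and conditional Jensen together with uniform integrability of $\bigcpar{|Y_i|^p}$ gives $\sup_i \norm{\tilde Y_i^M}_p \le 2 \eta(M)$, where $\eta(M) := \sup_i \E\bigsqpar{|Y_i|^p \ett{|Y_i|>M}}^{1/p} \to 0$ as $M \too$. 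For any fixed $(A_i)$ with $\sum_i \norm{A_i}^2 \le 1$, set $Z := \sum_i A_i Y_i$, $Z_M' := \sum_i A_i Y_i^M$, $Z_M'' := \sum_i A_i \tilde Y_i^M$; applying \refL{LL2} with exponent $2p$ to $(Y_i^M)$ and with exponent $p$ to $(\tilde Y_i^M)$ yields $\bignorm{Z_M'}_{2p} \le 2 C_{2p} M$ and $\bignorm{Z_M''}_p \le 2 C_p \eta(M)$, both uniformly over the admissible $(A_i)$; applied to $(Y_i)$ itself, \refL{LL2} also establishes the \as{} convergence of $Z$ and a uniform $L^p$ bound on $Z$.

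To deduce uniform integrability of $\bigcpar{|Z|^p}$ from $|Z|^p \le 2^{p-1}(|Z_M'|^p + |Z_M''|^p)$, given $\eps > 0$ I would first choose $M$ large enough that $2^{p-1}(2C_p\eta(M))^p < \eps/2$ uniformly in $(A_i)$, then with this $M$ fixed use \Holder{} to bound
\begin{align*}
\E\bigsqpar{|Z|^p \ett{|Z|^p > K}} \le 2^{p-1} \norm{Z_M'}_{2p}^p \, \P\bigpar{|Z|^p > K}^{1/2} + \eps/2,
\end{align*}
and finally apply Markov's inequality together with the uniform $L^p$ bound on $Z$ to make $\P(|Z|^p > K)$ small enough that the first term is also below $\eps/2$ for $K$ sufficiently large. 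The main obstacle is the bookkeeping around the truncation: the two pieces must remain \emph{centred} so that they stay martingale differences, and one must deliberately invoke \refL{LL2} at two different exponents --- $2p$ for the bounded piece and $p$ for the tail --- so that \Holder{}'s inequality has enough room to absorb the bounded piece while the tail remains uniformly small.
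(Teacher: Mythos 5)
Your proposal is correct and follows essentially the same route as the paper: the paper performs the identical centred truncation $Y_i = Y_i' + Y_i''$, invokes \refL{LL2} at a higher exponent $r>p$ on the bounded piece (the paper takes $r=p+1$ where you take $r=2p$) and at exponent $p$ on the tail, then uses \Holder{} with a small-probability event $\cE$ rather than your large-threshold $K$ formulation of uniform integrability --- equivalent bookkeeping. The key observations you flag (keeping both pieces centred so they remain martingale differences, and using two exponents so \Holder{} has room) are exactly the ones the paper relies on.
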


Note that the a.s.\ convergence of $\sum_i A_iY_i$ follows from \refL{LL2},
which also shows that the $L^p$-norms of these sums are bounded.
We obtain Lemma \ref{LLui} from \refL{LL2} by a simple truncation argument.

\begin{proof}[Proof of \refL{LLui}]
Let $\eps>0$. By assumption, there exists $M=M(\eps)$ such that
\begin{align}
  \label{lui1}
\E\bigabs{Y_i\indic{|Y_i|> M}}^p <\eps^p,
\qquad i\ge1.
\end{align}
Let
\begin{align}
 Y_i'&:= Y_i\indic{|Y_i|\le M} - \E \bigpar{Y_i\indic{|Y_i|\le M}\mid \cF_{i-1}},
\\
 Y_i''&:= Y_i\indic{|Y_i|> M} - \E \bigpar{Y_i\indic{|Y_i|> M}\mid \cF_{i-1}}.
\end{align}
Then $Y_i=Y_i'+Y_i''$. Furthermore,
\begin{align}
  |Y_i'|&\le 2M \qquad \text{a.s.}, \label{luiyi'}
\\
\norm{Y_i''}_p &\le 2\bignorm{Y_i\indic{|Y_i|> M}}_p < 2\eps. \label{luiyi''}
\end{align}
Let $r:=p+1$ (any fixed $r>p$ will do), and note that \eqref{luiyi'} implies
\begin{align}\label{luiq'}
  \norm{Y_i'}_r\le 2M.
\end{align}

Let $(A_i)\xoo$ be any sequence of matrices with $\sum_i\norm{A_i}^2\le1$. Then
\refL{LL2} yields, together with
\eqref{luiq'} and \eqref{luiyi''},
\begin{align}
  \biggnorm{\sum_i A_i Y_i'}_r&\le 2C_r M, \label{lulu'}
\\
  \biggnorm{\sum_i A_i Y_i''}_p&\le 2C_p\eps,\label{lulu''}
\end{align}

Let $\gd>0$ and let $\cE$ be any event with $\P(\cE)\le\gd$.
We have $\sum_i A_i Y_i = \sum_i A_i Y_i'+\sum_i A_i Y_i''$, and thus (crudely),
\begin{align}
  \biggabs{\sum_i A_i Y_i}^p 
\le 2^p\biggabs{\sum_i A_i Y_i'}^p+2^p\biggabs{\sum_i A_i Y_i''}^p.
\end{align}
Hence, using \Holder's inequality with $s:=(r/p)'=r/(r-p)$,
and \eqref{lulu'}--\eqref{lulu''},
\begin{align}
\E\lrpar{\biggabs{\sum_i A_i Y_i}^p \indic{\cE}}  &
\le
2^p\E\lrpar{\biggabs{\sum_i A_i Y_i'}^p \indic{\cE}}  
+2^p \E\lrpar{\biggabs{\sum_i A_i Y''_i}^p \indic{\cE}} 
\notag\\&
\le
2^p\biggnorm{\Bigabs{\sum_i A_i Y_i'}^p}_{r/p}\bignorm{ \indic{\cE}}_s  
+2^p \E\lrpar{\biggabs{\sum_i A_i Y''_i}^p } 
\notag\\&
=
2^p\biggnorm{\sum_i A_i Y_i'}_{r}^p \P\xpar{\cE}^{1/s}  
+2^p \biggnorm{\sum_i A_i Y''_i}_p^p 
\notag\\&
\le 2^p (2C_r M)^p \gd^{1/s} + 2^p(2C_p\eps)^p.
\label{luiq}
\end{align}
For any $\eta>0$, we can make the \rhs{} of \eqref{luiq}
$<\eta+\eta$ by first choosing $\eps$ and then $\gd$ small enough.
Consequently,
\begin{align}\label{llx}
  \lim_{\gd\to0}\sup_{\P(\cE)\le\gd,\,\sum_i \norm{A_i}^2\le1}
\E\lrpar{\biggabs{\sum_i A_i Y_i}^p \indic{\cE}} =0.
\end{align}
Finally, the assumption implies $\sup_i\E|Y_i|^p<\infty$, and thus
another application of \refL{LL2} yields
$\sup_{\sum_i \norm{A_i}^2\le1}\E\bigabs{\sum_i A_i Y_i}^p  <\infty$,
which together with \eqref{llx} shows the uniform integrability of \eqref{llui}.
\end{proof}

\section{Proof of \refL{Lsimple}}\label{AP}
\begin{proof}
  Consider an eigenvalue  $\gl\neq\gl_1=b$. Then \eqref{24b} shows that
$(A-\gl I)P_\gl=N_\gl$, and thus, since $\nu_\gl<q$, 
$(A-\gl I)^qP_\gl=((A-\gl I)P_\gl)^q=0$. (Recall that $P_\gl$ is a
projection and that it commutes with $A$.)
Hence, for any vector $v\in\bbC^q$,  since $u_1(A-\gl I)=(\gl_1-\gl)u_1$,
\begin{align}
  0=u_1'(A-\gl I)^qP_\gl v
=(\gl_1-\gl)^qu_1P_\gl v,
\end{align}
and consequently
\begin{align}
  u_1'P_\gl v=0.
\end{align}
This says that $u_1$ is orthogonal to the generalized eigenspace 
$E_\gl=P_\gl\bbC^q$
for every $\gl\neq\gl_1$. 
Since $\bbC^q=\bigoplus_\gl E_\gl$ and $u_1\neq0$, it follows that $u_1$ is not
orthogonal to $E_{\gl_1}=\set{zv_1:z\in\bbC}$. Hence $u_1\cdot v_1\neq0$ and
we may choose $v_1$ such that \eqref{normalized} holds.

Since $P_{\gl_1}$ is a projection onto the eigenspace $E_{\gl_1}$ spanned by
$v_1$ we have
\begin{align}\label{sjw2}
P_{\gl_1}v=(u\cdot v)v_1  ,
\qquad v\in\bbC^q
\end{align}
for some vector
$u$ with $u\cdot v_1=1$. Furthermore, 
\begin{align}
u_1'P_{\gl_1}A  
=
u_1'AP_{\gl_1}
=\gl_1 u_1'P_{\gl_1}
\end{align}
and thus $u_1'P_{\gl_1}$ is a left eigenvector of $A$ with eigenvalue
$\gl_1$
and thus a multiple of $u_1$. Since $P_{\gl_1}$ is a projection, it follows that
$u_1'P_{\gl_1}=u_1'$. Hence \eqref{sjw2} implies that, for any $v\in\bbC^q$,
\begin{align}
  u_1'v =u_1'P_{\gl_1}v =(u\cdot v)(u_1\cdot v_1)
=u\cdot v.
\end{align}
Consequently, $u=u_1$; thus \eqref{sjw2} yields \eqref{pl2}, and thus also
\eqref{pl1}.
\end{proof}

\newcommand\AAP{\emph{Adv. Appl. Probab.} }
\newcommand\JAP{\emph{J. Appl. Probab.} }
\newcommand\JAMS{\emph{J. \AMS} }
\newcommand\MAMS{\emph{Memoirs \AMS} }
\newcommand\PAMS{\emph{Proc. \AMS} }
\newcommand\TAMS{\emph{Trans. \AMS} }
\newcommand\AnnMS{\emph{Ann. Math. Statist.} }
\newcommand\AnnPr{\emph{Ann. Probab.} }
\newcommand\CPC{\emph{Combin. Probab. Comput.} }
\newcommand\JMAA{\emph{J. Math. Anal. Appl.} }
\newcommand\RSA{\emph{Random Struct. Alg.} }
\newcommand\ZW{\emph{Z. Wahrsch. Verw. Gebiete} }
\newcommand\DMTCS{\jour{Discr. Math. Theor. Comput. Sci.} }

\newcommand\AMS{Amer. Math. Soc.}
\newcommand\Springer{Springer-Verlag}
\newcommand\Wiley{Wiley}

\newcommand\vol{\textbf}
\newcommand\jour{\emph}
\newcommand\book{\emph}
\newcommand\inbook{\emph}
\def\no#1#2,{\unskip#2, no. #1,} 
\newcommand\toappear{\unskip, to appear}

\newcommand\arxiv[1]{\texttt{arXiv:#1}}
\newcommand\arXiv{\arxiv}

\def\nobibitem#1\par{}

\end{document}